\newtheorem{theorem}{Theorem}
\newtheorem{lemma}[theorem]{Lemma}
\newcommand{\mbf}[1]{\mathbf{#1}}
\newcommand{\norm}[1]{\left\lVert#1\right\rVert}
\newcommand{\noin}{\noindent}
\newcommand{\qed}{\ \hfill \rule{1ex}{1ex}\\}
\newenvironment{proof}{{\noin \bf Proof}: }{\qed}
\begin{document}
    \title{Improved Bound for the Gerver-Ramsey Collinearity Problem}
    \author{Thomas F. Lidbetter}


    \maketitle
    
    \begin{abstract}
        Let $S$ be a finite subset of $\mathbb{Z}^n$. A vector sequence $(\mbf{z}_i)$ is an $S$-walk if and only if $\mbf{z}_{i+1} - \mbf{z}_i$ is an element of $S$ for all $i$. Gerver and Ramsey showed in 1979 that for $S\subset \mathbb{Z}^3$ there exists an infinite $S$-walk in which no $5^{11} + 1=48{\small,}828{\small,}126$ points are collinear. Here, we use the same general approach, but with the aid of a computer search, to improve the bound to $189$.
    \end{abstract}
    
    
    \section{Introduction}

    Forty-four years ago, Gerver \cite{gerver1979long} and Gerver and Ramsey \cite{gerver1979certain} considered a problem of finding long sequences of vectors avoiding too many collinear points, where the differences between consecutive vectors are drawn from a finite set. To be precise, an $S$-walk is any (finite or infinite) sequence of vectors in $\mathbb{R}^n$, say $(\mbf{z}_i)$, such that $\mbf{z}_{i+1}-\mbf{z}_i\in S$, for all $i$. The points of an $S$-walk are the endpoints of the vectors in the walk. A subset of points from an $S$-walk are collinear if there is a straight line intersecting all points in that subset. Gerver and Ramsey \cite{gerver1979certain} showed that in the case where $S\subset \mathbb{Z}^2$, for every $S$ and positive integer $K$ there exists a finite integer, $N(S,K)$, the largest value such that there are no $K+1$ collinear points in the first $N(S,K)$ terms of the $S$-walk. As an example, for $S=\{\mbf{i},\mbf{j}\}\subset\mathbb{Z}^2$, where $\mbf{i}$ and $\mbf{j}$ are orthonormal unit vectors, the sequence of steps: $\mbf{i},\mbf{j},\mbf{i}$, gives points $(0,0), (1,0), (1,1), (2,1)$. This avoids $3$ collinear points, but adding another step from $S$ will guarantee $3$ collinear points. Similarly, the sequence of steps: $\mbf{i},\mbf{j},\mbf{i},\mbf{i},\mbf{j},\mbf{i},\mbf{i},\mbf{j}$, avoids $4$ collinear points, but adding another step will guarantee $4$ collinear points. This walk is demonstrated in Figure~\ref{fig:maximalwalk}. Gerver \cite{gerver1979long} gives a construction where for any $S$ with at least two linearly independent vectors, $N(S,K)$ grows faster than every polynomial function of $K$. The sequence $a(n)$, the smallest integer $t$ such that every $\{\mbf{i},\mbf{j}\}$-walk of length $t$ is guaranteed to have at least $n$ collinear points, is given in the {\it On-Line Encyclopedia of Integer Sequences} \cite{oeis} as sequence \href{https://oeis.org/A231255}{A231255}. Only the first six terms are known.

    \begin{figure}
        \centering
        \includegraphics[width=0.7\textwidth]{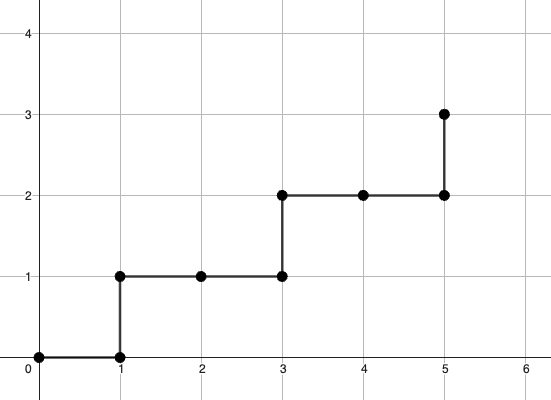}
        \caption{Points in a longest $S$-walk avoiding $4$ collinear points for $S=\{\mbf{i},\mbf{j}\}$.}
        \label{fig:maximalwalk}
    \end{figure}
    
    For the relaxation of the condition on $S$ to $S\subset \mathbb{Z}^3$, Gerver and Ramsey proved \cite{gerver1979certain} that there is an infinite $S$-walk, $W$, that has no $5^{11}+1=48{\small,}828{\small,}126$ collinear points. The approach used is constructive. While the discovery of a finite upper bound on the number of collinear points in an $S$-walk for $S\subset \mathbb{Z}^3$ is an intriguing result, the upper bound is large relative to the value of three that Gerver and Ramsey \cite{gerver1979certain} suggest is the true maximum number of collinear points in $W$. Furthermore, this bound from forty-four years ago has not been improved upon in the time since. We begin here by stating the main result of the present work: an improvement to the bound.
    
    \begin{theorem}
        The infinite $S$-walk, $W$, has no $189$ collinear points.
        \label{thm:collinearbound}
    \end{theorem}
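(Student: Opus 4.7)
The plan is to mirror the recursive structure of Gerver and Ramsey's original construction, in which the infinite $S$-walk $W$ arises as a limit of finite walks $W_0, W_1, \ldots, W_{11}$ at successively larger scales. At each stage $W_{k+1}$ is obtained by concatenating translated copies of $W_k$, offset by a carefully chosen step vector at scale $k+1$, so that the number of collinear points present in $W_{k+1}$ can be bounded in terms of the number present in $W_k$. I would fix notation for a per-level bound $c_k$ on the maximum number of collinear points in $W_k$, with $c_0$ trivial and the final target being $c_{11}\leq 188$.

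The core of the argument is a sharpened recursion $c_{k+1}\leq f_k(c_k)$ that replaces the uniform multiplicative factor of $5$ used in \cite{gerver1979certain}. A single line $\ell$ meeting many points of $W_{k+1}$ must either lie inside one translated copy of $W_k$ (contributing at most $c_k$ points) or cross several copies, in which case the translations are themselves forced onto a line, and the step set together with the chosen offsets restricts how many copies this can involve. The plan is to make this second case quantitative by exhaustively enumerating, for each level $k$, the incidence patterns by which a line can cross the finitely many translates used at level $k+1$, and to select the offsets so as to minimize the worst-case pattern count. This is where the computer search enters: it both chooses the offsets at every level and certifies the resulting tight value of $f_k$.

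With those ingredients in place, the bound $c_{11}\leq 188$ would follow by straightforward induction. For the base levels, where $c_k$ is small, I would compute $c_k$ by direct exhaustive enumeration over the finite walk $W_k$, using that the number of candidate lines is bounded by $\binom{|W_k|}{2}$ and that collinearity over $\mathbb{Z}^3$ is decidable by integer arithmetic. For the upper levels, the recursion $c_{k+1}\leq f_k(c_k)$ is applied, with the values of $f_k$ output by the search. The final product of the per-level factors, together with constants from the base case, should come out to at most $188$.

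The main obstacle is tractability of the search. The space of candidate offsets and the space of possible line-block incidence patterns both grow rapidly with $k$, so without aggressive pruning (for example, exploiting symmetry of the step set $S$, or restricting attention to offsets lying on a fixed sublattice) the enumeration at the higher levels would be infeasible. A secondary obstacle is verifying correctness of the computer-assisted bound: since $f_k$ depends on the geometry of the chosen offsets, each claimed $f_k$ must be matched by a deterministic proof certificate, essentially a list of every line-block incidence pattern together with a numeric verification that it contributes at most the stated number of collinear points.
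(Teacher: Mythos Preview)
Your proposal misreads both the object being studied and the shape of the argument. The walk $W$ is the \emph{fixed} Gerver--Ramsey sequence $A_0\subset A_1\subset\cdots$, built by the recipe $A_{n+1}=(A_n,\alpha A_n,R\beta A_n,A_n,R\beta\alpha A_n,R\beta A_n,A_n)$; there is no freedom to ``select the offsets so as to minimize the worst-case pattern count.'' The theorem is a statement about this particular infinite walk, not about an optimized construction. Likewise, the number $11$ in $5^{11}$ is not the number of levels in a finite construction $W_0,\dots,W_{11}$; the walk has infinitely many levels, and $11$ was Gerver and Ramsey's bound on the \emph{scale spread} of any collinear set, with $5$ their per-scale trapezoid-crossing count. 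A recursion $c_{k+1}\le f_k(c_k)$ on per-level collinear counts does not terminate and is not how either proof works.

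The paper's argument has a completely different architecture. Project $W$ onto the plane orthogonal to $\mbf{i}+\mbf{j}+\mbf{k}$; the points $\mbf{z}_{m7^n},\dots,\mbf{z}_{(m+1)7^n}$ sit in a trapezoid $T_n^m$ of base $4^n\gamma$, and these trapezoids nest self-similarly. For collinear $\mbf{z}_p,\mbf{z}_q,\mbf{z}_r,\mbf{z}_s$ with $7^n\le|p-q|<7^{n+1}$ and $7^m\le|r-s|<7^{m+1}$, comparing $\norm{\cdot}^\perp/\norm{\cdot}^\parallel$ against computer-verified min/max trapezoid distances forces $(7/4)^{m-n}<9$, hence $m-n\le 3$. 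A separate massive computer check (Lemma~\ref{lem:localbound}) shows at most $6$ collinear points in any $7^5$ consecutive indices. These two facts together mean any collinear set hits at most one point per trapezoid of order $n-4$ within a window of $7^4$ consecutive such trapezoids, and a final computer sweep over all such trapezoid configurations shows a line meets at most $188$ of them. The $n=0$ case is handled separately and gives only $42$. None of this is a level-by-level multiplicative recursion; the bound $188$ comes from a single direct line--trapezoid incidence count once the scale spread has been pinned down.
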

    
    To improve the bound we use the same construction, but we additionally show how to generate $W$ as the fixed point of a morphism to aid in some proofs. This construction is covered in Section~\ref{sec:construction}. In Section~\ref{sec:collinearbound} we follow a similar argument to that of Gerver and Ramsey \cite{gerver1979certain} to assert the improved bound on the number of collinear points. Some case-checking is carried out by a computer program. The algorithms used are described in Section~\ref{sec:algorithms}. An implementation of these algorithms is provided at \url{github.com/FinnLidbetter/avoiding-collinearity} \cite{Lidbetter_Avoiding_Collinearity_2022}. We conclude with some remarks on how this result might be pushed further to reduce the bound in Section~\ref{sec:improving}. Before proceeding, we define some useful terminology and notation.

    \subsection{Terminology and notation}
    
    An alphabet is a set of symbols. A word $w=w_1\cdots w_n$ is a sequence of symbols for symbols $w_1,w_2,\dots,w_n$ in some alphabet $\Sigma$. Define $w[i]$ to be the  $i^{\text{th}}$ symbol of $w$. That is $w[1]=w_1,\dots, w[n]=w_n$ and define $w[i:j]$ to be the subword of $w$ from index $i$ to index $j$ inclusive. The reversal of word $w$ is given by $w^R=w_nw_{n-1}\cdots w_2w_1$.
    
    A morphism $\mu: \Sigma\rightarrow \Sigma^*$ is a mapping from symbols in alphabet $\Sigma$ to strings in $\Sigma^*$. The mapping is extended to $\mu: \Sigma^*\rightarrow \Sigma^*$ by defining $\mu(\varepsilon)=\varepsilon$ and $\mu(uv)=\mu(u)\mu(v)$ for strings $u,v\in\Sigma^*$. For a word $u\in\Sigma^*$ define $\mu^1(u)=\mu(u)$ and $\mu^2(u)=\mu(\mu(u))$. We can then inductively define $\mu^n(u)=\mu(\mu^{n-1}(u))$. We take $\mu^\omega (u)$ to be the fixed point of iterating $\mu$, if such a fixed point exists.
    
    Adopting the same notation as Gerver and Ramsey \cite{gerver1979certain}, for a vector $\mbf{z}\in \mathbb{R}^3$ with $\mbf{z}=z_1{\bf i} + z_2{\bf j} + z_3{\bf k}$ we use $$\norm{\mbf{z}}^\parallel = z_1 + z_2 + z_3$$ and $$\norm{\mbf{z}}^\perp = \sqrt{z_1^2 + z_2^2 + z_3^2 - z_1z_2 - z_2z_3 - z_3z_1}.$$ So $\norm{\mbf{z}}^\parallel$ and $\norm{\mbf{z}}^\perp$ are proportional to the components of $\mbf{z}$ parallel and perpendicular, respectively, to the vector $\mbf{i} + \mbf{j} + \mbf{k}$. For future use we define the constant $\gamma$ to be the length of the component of $\mbf{i},\mbf{j},$ or $\mbf{k}$ perpendicular to $\mbf{i}+\mbf{j}+\mbf{k}$. Then $\gamma=\sqrt{2/3}$ and in general the perpendicular component of $\mbf{z}$ has length $\gamma\norm{\mbf{z}}^\perp$.
    
    \section{The construction}
    \label{sec:construction}
    
    We first recap the method used by Gerver and Ramsey \cite{gerver1979certain}, before giving an alternative method using a morphism for producing the same $S$-walk. The morphism representation makes some later proofs easier.
    
    \subsection{The Gerver and Ramsey construction}\label{sec:GerverRamseyConstruction}
    If $A=(a_1, \dots, a_n)$ and $B=(b_1, \dots, b_m)$ are ordered sets of vectors, and $\beta$ is a vector operator, let $RA=(a_n, \dots, a_1)$, and $(A,B)=(a_1,\dots, a_n,b_1,\dots, b_m)$, and $\beta A=(\beta a_1, \dots, \beta a_n)$. Define vector operators $\alpha$ and $\beta$ that operate on the three orthonormal unit vectors $\bf{i}, \bf{j}, \bf{k}$ as 
    \begin{align*}
        \alpha \bf{i} &=\bf{j} \\
        \alpha \bf{j} &=\bf{i} \\
        \alpha \bf{k} &=\bf{k} \\
        \beta \bf{i} &=\bf{i} \\
        \beta \bf{j} &=\bf{k} \\
        \beta \bf{k} &=\bf{j}
    \end{align*}
    Let $A_0=(\bf{i})$, then define $A_{n+1}=(A_n, \alpha A_n, R\beta A_n, A_n, R\beta\alpha A_n, R \beta A_n, A_n)$. For every positive integer, $n$, define the sequence of vectors $(\mbf{v}_p)_{1\leq p \leq 7^n}$ such that $(\mbf{v}_1,\dots, \mbf{v}_{7^n})=A_n$. We then define $\mbf{z}_p=\Sigma_{q=1}^{p} \mbf{v}_q$ for all positive integers $p$ and take $\mbf{z}_0$ to be the zero vector. Then $W=(\mbf{z}_p)_{p\geq 0}$ is an $S$-walk. This walk begins with the following $35$ steps, i.e., the vectors $\mbf{v}_1,\dots,\mbf{v}_{35}$:
    $$\mbf{i},\mbf{j},\mbf{i},\mbf{i},\mbf{k},\mbf{i},\mbf{i},\mbf{j},\mbf{i},\mbf{j},\mbf{j},\mbf{k},\mbf{j},\mbf{j},\mbf{i},\mbf{i},\mbf{j},\mbf{i},\mbf{i},\mbf{k},\mbf{i},\mbf{i},\mbf{j},\mbf{i},\mbf{i},\mbf{k},\mbf{i},\mbf{i},\mbf{k},\mbf{k},\mbf{j},\mbf{k},\mbf{k},\mbf{i},\mbf{k}.$$ 
    Gerver and Ramsey proved that $W$ is an infinite $S$-walk in which no $5^{11}+1$ points are collinear \cite{gerver1979certain}.
    
    \subsection{A construction using a morphism}

    We first state the morphism for the construction, then give an intuition for why this produces the same sequence, before formally proving that the sequence is identical to that of Section~\ref{sec:GerverRamseyConstruction}. The definition of the morphism is due to Luke Schaeffer, shared via an unpublished personal communication \cite{Schaeffer2012}.
    
    For the alphabet $\Sigma=\{i,j,k,i',j',k',i_b,j_b,k_b,i_b',j_b',k_b'\}$ define the morphism $\mu: \Sigma^*\rightarrow \Sigma^*$ as:
    \begin{align*}
            \mu(i) &= i ~ j' ~ i_b' ~ i ~ k_b ~ i_b' ~ i \\
            \mu(j) &= j ~ k' ~ j_b' ~ j ~ i_b ~ j_b' ~ j \\
            \mu(k) &= k ~ i' ~ k_b' ~ k ~ j_b ~ k_b' ~ k \\
            \mu(i') &= i' ~ k ~ i_b ~ i' ~ j_b' ~ i_b ~ i' \\
            \mu(j') &= j' ~ i ~ j_b ~ j' ~ k_b' ~ j_b ~ j' \\
            \mu(k') &= k' ~ j ~ k_b ~ k' ~ i_b' ~ k_b ~ k' \\
            \mu(i_b) &= i_b ~ i' ~ k ~ i_b ~ i' ~ j_b' ~ i_b \\
            \mu(j_b) &= j_b ~ j' ~ i ~ j_b ~ j' ~ k_b' ~ j_b \\
            \mu(k_b) &= k_b ~ k' ~ j ~ k_b ~ k' ~ i_b' ~ k_b \\
            \mu(i_b') &= i_b' ~ i ~ j' ~ i_b' ~ i ~ k_b ~ i_b' \\
            \mu(j_b') &= j_b' ~ j ~ k' ~ j_b' ~ j ~ i_b ~ j_b' \\
            \mu(k_b') &= k_b' ~ k ~ i' ~ k_b' ~ k ~ j_b ~ k_b'          
    \end{align*}
    Additionally, we define the output map $\phi: \Sigma\rightarrow \{\mbf{i}, \mbf{j}, \mbf{k}\}$ as 
    \begin{align*}
        \phi(i)&=\phi(i_b)=\phi(i')=\phi(i_b')=\mbf{i}\\
        \phi(j)&=\phi(j_b)=\phi(j')=\phi(j_b')=\mbf{j}\\
        \phi(k)&=\phi(k_b)=\phi(k')=\phi(k_b')=\mbf{k}.
    \end{align*}
    We extend this as $\phi: \Sigma^*\rightarrow \{\mbf{i}, \mbf{j}, \mbf{k} \}^*$ by taking $\phi(\varepsilon)=\varepsilon$ and $\phi(uv)=\phi(u)\phi(v)$ for $u,v\in\Sigma^*$.
    
    Observe that since for every $a\in\Sigma$ we have that $\mu(a)=au$ for some $u\in\Sigma^*$, the fixed point $\mu^\omega(i)$ exists and $\mu^n(i)$ is a prefix of $\mu^{n+1}(i)$ for all positive integers, $n$. For convenience, we define $\lambda=\mu^\omega(i)$. Define the vector sequence $(\mbf{u}_p)_{p\geq 1}$ with $\mbf{u}_p = \phi(\lambda[p])$, and then as above, define $(\mbf{x}_p)_{p\geq 0}$ with $\mbf{x}_p=\Sigma_{q=1}^{p} \mbf{u}_q$ for all positive integers $p$ and take $\mbf{x}_0$ to be the zero vector. Then $W_\mu=(\mbf{x}_p)_{p\geq 0}$ is an $S$-walk.  

    \subsection{Equivalence of the constructions}
    
    Th morphism representation is derived from the vector operator construction by observing that the reversal and $\alpha$ and $\beta$ operators can be interpreted as group actions on the set of unit vectors. Each of the $12$ symbols in $\Sigma$ correspond to the $12$ possible group elements. The image of $\mu$ for each of those symbols is the result of applying the group actions corresponding to the operators in the definition of $A_{n+1}$. The group is isomorphic to the dihedral group of order $12$. Considering just the $\alpha$ and $\beta$ operators we can map $\mbf{i},\mbf{j},\mbf{k}$ to each permutation of those vectors, e.g., with the chained operators: $\alpha\alpha$, $\alpha$, $\beta$, $\beta\alpha$, $\alpha\beta$, and  $\beta\alpha\beta$. This gives us the symmetric group $S_3$.  Since the reversal operator commutes with the other operators and it is its own inverse, adding the reversal operator results in the product of $S_3$ and the cyclic group of order $2$. The direct product of these groups is the dihedral group of order $12$. 

    To prove the equivalence of the constructions, we define a few useful functions for translating between the two representations. As an abuse of notation for the purpose of exposition, we overload the definitions of $\alpha$ and $\beta$ to also represent functions $\alpha: \Sigma\rightarrow \Sigma$ and $\beta: \Sigma\rightarrow \Sigma$, where
    \begin{center}
    \begin{tabular}{c c c}
        $\alpha(i) = j'$, & $\alpha(j) = i'$, & $\alpha(k) = k'$, \\
        $\alpha(i') = j$, & $\alpha(j') = i$, & $\alpha(k') = k$, \\
        $\alpha(i_b) = j_b'$, & $\alpha(j_b) = i_b'$, & $\alpha(k_b) = k_b'$, \\
        $\alpha(i_b') = j_b$, & $\alpha(j_b') = i_b$, & $\alpha(k_b') = k_b$, \\
    \end{tabular}
    \end{center}
    and
    \begin{center}
    \begin{tabular}{c c c}
        $\beta(i) = i'$, & $\beta(j) = k'$, & $\beta(k) = j'$, \\
        $\beta(i') = i$, & $\beta(j') = k$, & $\beta(k') = j$, \\
        $\beta(i_b) = i_b'$, & $\beta(j_b) = k_b'$, & $\beta(k_b) = j_b'$, \\
        $\beta(i_b') = i_b$, & $\beta(j_b') = k_b$, & $\beta(k_b') = j_b$. \\
    \end{tabular}
    \end{center}
    We also define a reversal parity switching function $R:\Sigma \rightarrow \Sigma$ as
    \begin{center}
        \begin{tabular}{c c c}
            $R(i) = i_b$, & $R(j) = j_b$, & $R(k) = k_b$, \\
            $R(i') = i_b'$, & $R(j') = j_b'$, & $R(k') = k_b'$, \\
            $R(i_b) = i$, & $R(j_b) = j$, & $R(k_b) = k$, \\
            $R(i_b') = i'$, & $R(j_b') = j'$, & $R(k_b') = k'$.
        \end{tabular}
    \end{center}
    We extend these functions to $\alpha: \Sigma^*\rightarrow \Sigma^*$, $\beta: \Sigma^*\rightarrow \Sigma^*$, and $R:\Sigma^*\rightarrow \Sigma^*$ to operate on words as $\alpha(\varepsilon)=\beta(\varepsilon)=R(\varepsilon)=\varepsilon$ and $\alpha(uv)=\alpha(u)\alpha(v)$, $\beta(uv)=\beta(u)\beta(v)$, and $R(uv)=R(u)R(v)$ for $u,v\in\Sigma^*$.
    
    Lastly, we define function, $g:\{\mbf{i}, \mbf{j}, \mbf{k}\}\times \{0,1\}\times \{0,1\} \rightarrow \Sigma$, mapping $\mbf{a}_p,b_p,c_p$ triples to elements of $\Sigma$:
    \begin{center}
        \begin{tabular}{c c c}
             $g(\mbf{i}, 0, 0) = i$, & $g(\mbf{j}, 0, 0) = j$, & $g(\mbf{k}, 0, 0) = k$, \\
             $g(\mbf{i}, 0, 1) = i'$, & $g(\mbf{j}, 0, 1) = j'$, & $g(\mbf{k}, 0, 1) = k'$, \\
             $g(\mbf{i}, 1, 0) = i_b$, & $g(\mbf{j}, 1, 0) = j_b$, & $g(\mbf{k}, 1, 0) = k_b$, \\
             $g(\mbf{i}, 1, 1) = i_b'$, & $g(\mbf{j}, 1, 1) = j_b'$, & $g(\mbf{k}, 1, 1) = k_b'$. \\
        \end{tabular}
    \end{center}
    One can verify the following identities using these functions and vector operators, keeping in mind that the $\alpha$ and $\beta$ on the left side are functions from $\Sigma$ to $\Sigma$, whereas the the $\alpha$ and $\beta$ on the right side are vector operators:
    \begin{align}
        \alpha(g(\mbf{a},b,c)) & = g(\alpha \mbf{a}, b, 1-c), \\
        \beta(g(\mbf{a},b,c)) & = g(\beta \mbf{a}, b, 1 - c), \\
        R(g(\mbf{a},b,c)) & = g(\mbf{a}, 1-b, c).
    \end{align}

    By the definition of $\mu(i)$, we have $$\mu^{n+1}(i)=\mu^n(i)\mu^n(j')\mu^n(i_b')\mu^n(i)\mu^n(k_b)\mu^n(i_b')\mu^n(i).$$ 
    It is straightforward to prove:
    \begin{align}
        \mu^n(j')&=\alpha(\mu^n(i)), \label{eqn:j'} \\
        \mu^n(i_b')&=R(\beta(\mu^n(i))^R), \label{eqn:ib'} \\
        \mu^n(k_b)&=R(\beta(\alpha(\mu^n(i)))^R). \label{eqn:kb}
    \end{align} 
    Substituting each of these into the statement of $\mu^{n+1}(i)$ as above gives a representation that is very similar in appearance to the definition of $A_{n+1}$. That is,
    $$\mu^{n+1}(i)=\mu^n(i)\alpha(\mu^n(i))R(\beta(\mu^n(i))^R)\mu^n(i)R(\beta(\alpha(\mu^n(i)))^R)R(\beta(\mu^n(i))^R)\mu^n(i),$$
    whereas,
    $$A_{n+1}=(A_n, \alpha A_n, R\beta A_n, A_n, R\beta\alpha A_n, R \beta A_n, A_n).$$
    We can now show that the constructions are equivalent.

    \begin{lemma}
        The $S$-walk, $W_\mu$, produced by the morphism construction is identical to the $S$-walk, $W$, produced by the Gerver and Ramsey construction.
    \label{lem:equivalent-construction}
    \end{lemma}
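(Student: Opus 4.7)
The plan is to prove by induction on $n$ that $\phi(\mu^n(i))$, regarded as a sequence of unit vectors in $\{\mbf{i},\mbf{j},\mbf{k}\}$, equals the tuple $A_n$ from the Gerver--Ramsey construction. Because both $W$ and $W_\mu$ are defined as partial sums of their underlying vector sequences, this equality yields $W_\mu = W$. The base case is immediate: $\phi(\mu^0(i)) = \phi(i) = \mbf{i} = A_0$.

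The main tools are four compatibility properties of the output map $\phi$, namely
\begin{align*}
\phi(\alpha(u)) &= \alpha\,\phi(u), & \phi(\beta(u)) &= \beta\,\phi(u), \\
\phi(R(u)) &= \phi(u), & \phi(u^R) &= \phi(u)^R,
\end{align*}
for every $u \in \Sigma^*$. Each follows by first checking the twelve symbol-level cases directly against their vector-operator counterparts (this is exactly where identities (1)--(3) tying $g$ to $\alpha, \beta, R$ get used), and then extending by an easy induction on word length since $\phi, \alpha, \beta, R$ and reversal all distribute over concatenation. Combining these gives
\[
\phi\bigl(R(\beta(u)^R)\bigr) = R\beta\,\phi(u), \qquad \phi\bigl(R(\beta(\alpha(u))^R)\bigr) = R\beta\alpha\,\phi(u),
\]
matching exactly the sequence-level operations $R\beta$ and $R\beta\alpha$ appearing in the definition of $A_{n+1}$.

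For the inductive step, I would apply $\phi$ termwise to the seven-part factorization
\[
\mu^{n+1}(i)=\mu^n(i)\,\alpha(\mu^n(i))\,R(\beta(\mu^n(i))^R)\,\mu^n(i)\,R(\beta(\alpha(\mu^n(i)))^R)\,R(\beta(\mu^n(i))^R)\,\mu^n(i)
\]
already established from (4)--(6). By the compatibility properties and the induction hypothesis $\phi(\mu^n(i)) = A_n$, the image is $(A_n, \alpha A_n, R\beta A_n, A_n, R\beta\alpha A_n, R\beta A_n, A_n)$, which is precisely $A_{n+1}$. Since every prefix of $\lambda = \mu^\omega(i)$ is a prefix of some $\mu^n(i)$, the vector sequences $(\mbf{u}_p)$ and $(\mbf{v}_p)$ coincide term by term, so their partial sums $\mbf{x}_p$ and $\mbf{z}_p$ coincide and $W_\mu = W$.

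The only step requiring any real care is the symbol-by-symbol verification of $\phi \circ \alpha = \alpha \circ \phi$, $\phi \circ \beta = \beta \circ \phi$, and $\phi \circ R = \phi$; this is a finite case analysis over the twelve elements of $\Sigma$ that makes direct use of the $g(\mbf{a},b,c)$ parametrization and the identities (1)--(3). Once these are in hand the induction is purely formal and mirrors the structural parallel, already observed by the author, between the displayed expansion of $\mu^{n+1}(i)$ and the recursion for $A_{n+1}$.
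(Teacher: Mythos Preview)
Your proof is correct and is in fact a cleaner route than the paper's. The paper proves a slightly stronger, index-by-index statement: it introduces auxiliary parity sequences $(b_p)$ and $(c_p)$ and shows by induction that $g(\mbf{a}_p,b_p,c_p)=\mu^{n+1}(i)[p]$ for each individual position $p$, via a four-case analysis on $s=\lfloor (p-1)/7^n\rfloor$. This establishes a bijection at the level of the twelve-letter alphabet $\Sigma$, not just at the level of the three output vectors, and then reads off $\phi(\mu^{n+1}(i)[p])=\mbf{a}_p$ as a consequence.

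You instead work at the level of whole words: you isolate the four compatibility identities $\phi\circ\alpha=\alpha\circ\phi$, $\phi\circ\beta=\beta\circ\phi$, $\phi\circ R=\phi$, and $\phi(u^R)=\phi(u)^R$, then push $\phi$ through the seven-block factorization of $\mu^{n+1}(i)$ already displayed in the paper. This is more economical for the lemma as stated, and it makes the structural parallel between the morphism and the Gerver--Ramsey recursion do all the work. What the paper's longer argument buys is the full $\Sigma$-level correspondence via $g$, which is conceptually satisfying (it pins down exactly which group element each symbol encodes) but is not actually invoked later in the paper. Your approach sacrifices that extra information for a shorter, more transparent induction.
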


    \begin{proof}
        For integers $n\geq 0$, let $A_n=(\mbf{a}_1, \dots, \mbf{a}_{7^n})$ as above in Section~\ref{sec:GerverRamseyConstruction}, where $\mbf{a}_p\in\{\mbf{i}, \mbf{j}, \mbf{k}\}$ for each integer $1\leq p\leq 7^n$ and $\mbf{a}_1=\mbf{i}$. From the definition of $A_{n+1}$, for $7^n+1\leq p \leq 7^{n+1}$ and $q=((p-1)\bmod{7^n}) + 1$ and $s=\lfloor\frac{p-1}{7^n}\rfloor$ we can write:
        \begin{equation}
            \mbf{a}_p =
            \begin{cases}
                \mbf{a}_q & \text{if } s\in\{0,3,6\},\\
                \alpha \mbf{a}_q & \text{if } s\in\{1\},\\
                \beta \mbf{a}_{7^n-q+1} & \text{if } s\in\{2,5\},\\
                \beta\alpha \mbf{a}_{7^n-q+1} & \text{if } s\in\{4\},
            \end{cases}
            \label{eqn:a}
        \end{equation}
        where $\alpha$ and $\beta$ are the vector operators defined in Section~\ref{sec:GerverRamseyConstruction}. We define two additional sequences $(b_p)_{p\geq 0}$ and $(c_p)_{p\geq 0}$ that track the parity of the number reversals and the parity of the number of vector operators applied, respectively, to get the $p^{\text{th}}$ term of $A_n$. Again, from the definition of $A_{n+1}$ we get $b_1=0$ and, 

        \begin{equation}
            b_p =
            \begin{cases}
                b_q & \text{if } s\in\{0,1,3,6\},\\
                1 - b_{7^n - q + 1} & \text{if } s\in\{2,4,5\}.
            \end{cases}
            \label{eqn:b}
        \end{equation}
        Similarly, we have $c_1=0$ and,
        
        \begin{equation}
            c_p =
            \begin{cases}
                c_q & \text{if } s\in\{0,3,6\},\\
                1 - c_q & \text{if } s\in\{1\},\\
                1 - c_{7^n-q+1} & \text{if } s\in\{2,5\},\\
                c_{7^n-q+1} & \text{if } s\in\{4\}.
            \end{cases}
            \label{eqn:c}
        \end{equation}
        
        We proceed by induction on $n$ to show that for all positive integers $p$ satisfying $7^n+1\leq p\leq 7^{n+1}$ we have $g(\mbf{a}_p,b_p,c_p)=\mu^{n+1}(i)[p]$ and $\phi(\mu^{n+1}(i)[p])=\mbf{a}_p$.

        We take $n=0$ as the base case. A direct application of the definitions of $\mbf{a}_p,b_p,c_p$, $\mu(i)[p]$, $g$, and $\phi$ gives  $g(\mbf{a}_p,b_p,c_p)=\mu^{n+1}(i)[p]$ and $\phi(\mu^{n+1}(i)[p])=\mbf{a}_p$ for $7^0+1\leq p \leq 7^1$.

        Now assume for $n=m$, for all positive integers $p$ with $7^m+1\leq p\leq 7^{m+1}$ we have that $g(\mbf{a}_p,b_p,c_p)=\mu^{m+1}(i)[p]$ and $\phi(\mu^{m+1}(i)[p])=\mbf{a}_p$. Suppose we have $p$ satisfying $7^{m+1}+1\leq p\leq 7^{m+2}$. We consider four cases for $s=\lfloor\frac{p-1}{7^{m+1}}\rfloor$. In all cases, for a given value of $p$, we take $q=((p-1)\bmod{7^{m+1}})+1$.

        \noindent {\bf Case 1:} $s\in \{0,3,6\}$.

        By equations~\ref{eqn:a},~\ref{eqn:b}, and \ref{eqn:c} we have
        \begin{align*}
            \mbf{a}_p &= \mbf{a}_q, \\
            b_p &= b_q, \\
            c_p &= c_q.
        \end{align*}
        This means that $g(\mbf{a}_p,b_p,c_p)=g(\mbf{a}_q,b_q,c_q)$. Since $7^m+1\leq q\leq 7^{m+1}$, then by the inductive hypothesis, we have $g(\mbf{a}_q,b_q,c_q)=\mu^{m+1}(i)[q]$. By the definition of $\mu$ and the assumption that $s\in\{0,3,6\}$ we get $\mu^{m+2}(i)[p]=\mu^{m+1}(i)[q]$. So $g(\mbf{a}_p,b_p,c_p)=\mu^{m+2}(i)[p]$ and $\phi(\mu^{m+2}(i)[p])=\mbf{a}_p$.

        \noindent {\bf Case 2:} $s\in\{1\}$.
        
        By equations~\ref{eqn:a},~\ref{eqn:b}, and \ref{eqn:c} we have
        \begin{align*}
            \mbf{a}_p &= \alpha\mbf{a}_q, \\
            b_p &= b_q, \\
            c_p &= 1-c_q.
        \end{align*}
        This means that $g(\mbf{a}_p,b_p,c_p)=g(\alpha\mbf{a}_q,b_q, 1-c_q)$. Since $7^m+1\leq q\leq 7^{m+1}$, and $s\in\{1\}$ we get $\mu^{m+2}(i)[p]=\mu^{m+1}(j')[q]$. By equation~\ref{eqn:j'} we get $\mu^{m+1}(j')[q]=\alpha(\mu^{m+1}(i))[q]=\alpha(\mu^{m+1}(i)[q])$ and by the inductive hypothesis this is equal to $\alpha(g(\mbf{a}_q,b_q,c_q))=g(\alpha\mbf{a_q},b_q,1-c_q) = g(\mbf{a}_p,b_p, c_p)$. So $g(\mbf{a}_p,b_p,c_p)=\mu^{m+2}(i)[p]$ and $\phi(\mu^{m+2}(i)[p])=\mbf{a}_p$.

        \noindent {\bf Case 3:} $s\in\{2,5\}$.

        Let $t=7^{m+1}-q+1$. By equations~\ref{eqn:a},~\ref{eqn:b}, and \ref{eqn:c} we have
        \begin{align*}
            \mbf{a}_p &= \beta\mbf{a}_{7^{m+1}-q+1}=\beta\mbf{a}_t, \\
            b_p &= 1 - b_{7^{m+1} - q + 1}=1 - b_t, \\
            c_p &= 1 - c_{7^{m+1} - q + 1}=1 - c_t.
        \end{align*}
        This gives $g(\mbf{a}_p,b_p,c_p)=g(\beta\mbf{a}_{t},1 - b_{t},1 - c_{t})$. Since $7^m+1\leq q\leq 7^{m+1}$ and $s\in\{2,5\}$ we get $\mu^{m+2}(i)[p]=\mu^{m+1}(i_b')[q]$. By equation~\ref{eqn:ib'} we get $$\mu^{m+1}(i_b')[q]=R(\beta(\mu^{m+1}(i))^R)[q]=R(\beta(\mu^{m+1}(i)))[t]=R(\beta((\mu^{m+1}(i)[t])))$$. By the inductive hypothesis this is equal to $$R(\beta(g(\mbf{a}_{t}, b_{t}, c_{t})))=g(\beta\mbf{a}_{t},1 - b_{t},1 - c_{t})=g(\mbf{a}_p,b_p,c_p).$$ So $g(\mbf{a}_p,b_p,c_p)=\mu^{m+2}(i)[p]$ and $\phi(\mu^{m+2}(i)[p])=\mbf{a}_p$.

        \noindent {\bf Case 4:} $s\in\{4\}$.

        Let $t=7^{m+1}-q+1$. By equations~\ref{eqn:a},~\ref{eqn:b}, and \ref{eqn:c} we have
        \begin{align*}
            \mbf{a}_p &= \beta\alpha\mbf{a}_{7^{m+1}-q+1}=\beta\alpha\mbf{a}_t, \\
            b_p &= 1 - b_{7^{m+1} - q + 1}=1 - b_t, \\
            c_p &= c_{7^{m+1} - q + 1}= c_t.
        \end{align*}

        This gives $g(\mbf{a}_p,b_p,c_p)=g(\beta\alpha\mbf{a}_{t},1-b_{t}, c_{t})$. Since $7^m+1\leq q\leq 7^{m+1}$ and $s\in\{4\}$ we get $\mu^{m+2}(i)[p]=\mu^{m+1}(k_b)[q]$. By equation~\ref{eqn:kb} we get $$\mu^{m+1}(k_b)[q]=R(\beta(\alpha(\mu^{m+1}(i)))^R)[q]=R(\beta\alpha((\mu^{m+1}(i)[t]))).$$ By the inductive hypothesis this is equal to $$R(\beta\alpha((g(\mbf{a}_{t},b_{t}, c_{t}))))=g(\beta\alpha\mbf{a}_{t}, 1-b_{t}, c_{t})=g(\mbf{a}_p, b_p, c_p).$$ So $g(\mbf{a}_p,b_p,c_p)=\mu^{m+2}(i)[p]$ and $\phi(\mu^{m+2}(i)[p])=\mbf{a}_p$.

        \noindent This covers all cases for showing $g(\mbf{a}_p,b_p,c_p)=\mu^{m+1}(i)[p]$ for $p\geq 1$ and $\phi(\mu^{m+1}(i)[p])=\mbf{a}_p$.
    \end{proof}
    
    \section{Bounding the number of collinear points}\label{sec:collinearbound}
    
    We devote this section to proving Theorem~\ref{thm:collinearbound}. We follow a similar argument to that used in Gerver and Ramsey's proof of their Theorem~2  \cite{gerver1979certain}. However, we extend the methods by making use of computer checks to verify results that are impractical to accomplish by hand. The first use of computer checks is in establishing an upper bound on the smallest indices of the distinct contiguous subsequences, of specified lengths, of vectors in $(\mbf{v}_p)_{p\geq 0}$.
    
    \begin{lemma}
        Let $I(n)$ be the index of the last new subword of length $n$ in $\lambda.$ That is, the largest index $j$ such that the there does not exist $k<j$ with $\lambda[k:k+n-1]=\lambda[j:j+n-1]$. Then $I(1)=215$ and $I(2)=558$ and $I(n)\leq 7\cdot I(\lceil n/7 \rceil + 1)$ for $n\geq 3$.
    \end{lemma}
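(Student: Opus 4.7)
The plan is to handle the three parts of the lemma separately: the exact values $I(1)=215$ and $I(2)=558$ by a computational enumeration paired with a structural closure argument, and the recursive bound for $n\geq 3$ by a block-decomposition argument exploiting $\lambda = \mu(\lambda)$ together with $|\mu(a)|=7$ for every $a\in\Sigma$.

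For the equalities I would first compute a prefix of $\lambda$ long enough to see all the relevant subwords (for instance $\mu^5(i)$, of length $7^5 = 16{,}807$), and tabulate the first-occurrence index of each distinct length-$1$ and length-$2$ subword. For length $1$ this immediately gives $I(1)=215$, because $|\Sigma|=12$ and the enumeration shows all twelve symbols first appear by position $215$ with the twelfth doing so exactly there. For length $2$ the enumeration yields $558$ as the last observed new index, so the work is to confirm that no further new length-$2$ subword could appear later in $\lambda$. For this I would use the following structural observation: writing $\lambda=\mu(\lambda)$ as a concatenation of length-$7$ blocks, any length-$2$ factor $\lambda[p:p+1]$ either lies inside a single block $\mu(a)$ (the case $p\not\equiv 0\pmod 7$), giving an internal length-$2$ factor of $\mu(a)$, or straddles a block boundary (the case $p\equiv 0\pmod 7$), giving $\mu(a)[7]\mu(b)[1]$ where $ab = \lambda[p/7 : p/7+1]$ is itself a length-$2$ factor of $\lambda$. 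The set of length-$2$ factors of $\lambda$ is therefore a fixed point of an induced operation on $\Sigma^2$, a finite set, so the closure terminates after finitely many steps. I would verify that the closed set matches exactly what appears in the computed prefix, which pins $I(2)=558$.

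For the recursion, set $m=\lceil n/7\rceil+1$ and suppose $j>7\,I(m)$. Under the block decomposition, position $j$ lies in block $b=\lceil j/7\rceil$ at offset $r=j-7(b-1)\in\{1,\dots,7\}$, and by hypothesis $b>I(m)$. A short arithmetic check (using $r\leq 7$ and $7\lceil n/7\rceil\geq n$) shows $r+n-1\leq 7m$, so the window $\lambda[j:j+n-1]$ is contained in the concatenation $\mu(\lambda[b:b+m-1])$ of $m$ consecutive block images. Since $b>I(m)$, the length-$m$ factor $\lambda[b:b+m-1]$ already occurs at some earlier index $b'<b$; applying $\mu$ symbol-by-symbol preserves the equality, so the length-$n$ factor at the same offset $r$ inside $\mu(\lambda[b':b'+m-1])$ equals $\lambda[j:j+n-1]$ and begins at position $7(b'-1)+r<j$. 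Hence $\lambda[j:j+n-1]$ is a repeated factor, which gives $I(n)\leq 7\,I(m)$.

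The main obstacle is the second paragraph: certifying that no new length-$2$ subword appears beyond position $558$ cannot be reduced through the stated recursion, which is vacuous at $n=2$. The structural closure above sidesteps this issue by reducing the question to a finite fixed-point computation over $\Sigma^2$, which is easily discharged alongside the prefix enumeration; the recursion itself is then a clean block-alignment argument.
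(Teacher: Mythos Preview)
Your proposal is correct and follows essentially the same strategy as the paper: direct computation for $I(1)$, a block/boundary analysis for $I(2)$, and the block-alignment argument via $\lambda=\mu(\lambda)$ for the recursion when $n\geq 3$. The recursion argument in particular matches the paper's, and your arithmetic check that $r+n-1\leq 7m$ is exactly the containment the paper asserts.

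The one place you work harder than necessary is $I(2)$. You set up a general closure on $\Sigma^2$, feeding straddling pairs $\mu(a)[7]\,\mu(b)[1]$ back in until a fixed point is reached. The paper instead observes the specific feature that every image has the form $\mu(a)=a\,u\,a$, so $\mu(a)[7]\,\mu(b)[1]=ab$ and the straddling case returns the pair you started with. Hence the set of length-$2$ factors of $\lambda$ is exactly $\bigcup_{b\in\Sigma}\{\text{length-}2\text{ factors of }\mu(b)\}$, a fixed finite set you can enumerate without iterating a closure. Your approach is more general (it would survive a morphism without the $a\cdots a$ property), while the paper's buys a one-line termination argument.
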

    \begin{proof}
        To show that $I(1)=215$ it suffices to compute the first $215$ terms and observe that index $215$ is the first occurrence of symbol $i_b$ and all other symbols in the alphabet appear before that. To show that $I(2)=558$, first observe that for all $a\in\Sigma$ we have that $\mu(a)=aua$ for some $u\in\Sigma^*$. This means that every subword of length 2 that appears in $\lambda$ is a subword of $\mu(b)$ for some $b\in\Sigma$. Enumerating all subwords of length 2 that appear in $\mu(b)$ for some $b\in\Sigma$ and constructing the first $559$ symbols of $\lambda$, we find that index $558$ is the first occurrence of $j_b'i_b$ and all other possible subwords of length 2 appear at an earlier index.
        
        Now suppose that $n\geq 3$. Suppose that $j$ is the index of the first occurrence of word $w$ of length $n$ in $\lambda$. Consider the word $u$ of length $\lceil n/7\rceil + 1$ starting at index $\lfloor j/7 \rfloor$ in $\lambda$, that is $u=\lambda[\lfloor j/7\rfloor:\lfloor j/7\rfloor + \lceil n/7 \rceil]$. We have that $w$ is a subword of $\mu(u)$. This is because $\lambda[7\lfloor j/7\rfloor : 7(\lfloor j/7\rfloor +\lceil n/7\rceil)+6]=\mu(\lambda[\lfloor j/7\rfloor:\lfloor j/7\rfloor + \lceil n/7 \rceil])$ by the definition of $\lambda$, and we have $7\lfloor j/7\rfloor \leq j$ and $j+n-1\leq 7(\lfloor j/7\rfloor +\lceil n/7\rceil)+6$. Since the first occurrence of $u$ is at an index at most $I(\lceil n/7\rceil + 1)$, then we have that the first occurrence of $w$ starts at an index at most $7\cdot I(\lceil n/7\rceil + 1)$.
    \end{proof}
    
    Observe that since $\lceil n/7 \rceil + 1<n$ for $n\geq 3$ we can compute the index of the last new subword of length $n$ in $\lambda$ via a recursive procedure that finds an upper bound for the index of the last new subword, using the index of the last new subword of length $\lceil n/7\rceil + 1$ and checking all subwords of length $n$ up to index $7\cdot I(\lceil n/7 \rceil + 1)$. This procedure is implemented as \texttt{IndexOfLastNewSubword} \cite{Lidbetter_Avoiding_Collinearity_2022}.
    
    Using this upper bound, we can show that for every consecutive $16807$ points in $(\mbf{z}_p)_{p\geq 0}$, there are at most $6$ are collinear points. Executing the \texttt{IndexOfLastNewSubword} routine, we find that the last new subword of length $16807$ occurs at index $9{\small,}375{\small,}904$. Exhaustively considering all possible lines between pairs of points in $(\mbf{z}_p)_{p\geq 0}$ and tracking the number of points found on each such line, we find that there are at most $6$ collinear points. This is asserted by compiling and running the \texttt{count-collinear} part of the project with Rust using, for example, \texttt{cargo run --release 9375904} from within the \texttt{count-collinear} sub-directory of the Avoiding Collinearity software project \cite{Lidbetter_Avoiding_Collinearity_2022}. However, doing so with a single invocation will take a while. The result was verified by the author over the course of two weeks, using approximately 2 years and 9 months of CPU time across many cloud compute servers running in parallel, processing independent chunks of the sequence. This part of the project is implemented in Rust for performance reasons and also for built-in support of lightweight $128$-bit integer types, which are used to give precise canonical representations of lines between all pairs of points without running into integer overflow. A data dump of the results of the computation can be found in the \texttt{collinearity\_data.csv} file in the root of the Avoiding Collinearity software project \cite{Lidbetter_Avoiding_Collinearity_2022}.
    
    \begin{lemma}\label{lem:localbound}
        In every $16807=7^5$ consecutive indices of $(\mbf{z}_p)_{p\geq 0}$ there are at most $6$ collinear points.
    \end{lemma}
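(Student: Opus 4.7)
The plan is to reduce the statement to a finite computer check, using the previous lemma to bound how far in the sequence we must look. Concretely, I would first observe that for any index $p_0 \ge 0$, the set of $16807$ consecutive points $\mbf{z}_{p_0}, \mbf{z}_{p_0+1}, \dots, \mbf{z}_{p_0+16806}$ is, up to a rigid translation by $\mbf{z}_{p_0}$, completely determined by the length-$16807$ subword $\lambda[p_0+1 : p_0+16807]$ via the output map $\phi$ and partial summation. Since collinearity is translation-invariant, the maximum number of collinear points in such a window depends only on the subword, not on $p_0$.

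Next I would invoke the previous lemma to bound the range of windows that must be examined. Applying the recursion $I(n) \le 7\cdot I(\lceil n/7\rceil+1)$ starting from $I(2)=558$, we can iterate through the lengths $2 \to 3 \to 4 \to \dots \to 16807$, each time running the \texttt{IndexOfLastNewSubword} routine up to the computed upper bound to obtain the exact value at the next length. This yields $I(16807) = 9{,}375{,}904$, which means every distinct length-$16807$ subword of $\lambda$ already occurs starting at some index in the range $[1,\, 9{,}375{,}904]$. Consequently, it suffices to verify the bound of $6$ collinear points for every one of the $9{,}375{,}904$ starting positions.

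The remaining work is the exhaustive computation. For each starting index $p_0$, I would compute the sequence of $16807$ consecutive points, then for every unordered pair of distinct points in the window produce a canonical representation of the affine line through them, bucket pairs by this representation, and count points on each line. Taking the maximum over all windows and all lines gives the desired bound of $6$. To avoid numerical issues, the line representations must be exact: a natural choice is to store each line by a normalized integer direction vector together with a normalized integer offset, all kept in an integer type wide enough to hold the arithmetic on coordinates that may grow on the order of $16807$ per axis (so squared quantities fit comfortably in $128$-bit integers).

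The main obstacle is not conceptual but computational: naively, each of the roughly $10^7$ windows involves $\binom{16807}{2} \approx 1.4 \times 10^8$ pairs, for a total on the order of $10^{15}$ pair operations. The hard part of the proof is therefore to organize this search efficiently, exploit the overlap between consecutive windows (so that points and lines are not recomputed from scratch each time), parallelize across independent chunks of starting indices, and implement the line canonicalization in exact integer arithmetic. Once that engineering is in place, the verification is mechanical and the conclusion follows from the reported output of the computation.
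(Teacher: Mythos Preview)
Your proposal is correct and matches the paper's approach: reduce to a finite check via $I(16807)=9{,}375{,}904$, then exhaustively count collinear points using exact canonical line representations in 128-bit integer arithmetic, parallelized across independent chunks. The one simplification the paper makes is to run a \emph{single global} collinearity check over the first $\sim 9.4$ million points (verifying that no seven of them are collinear anywhere, a slightly stronger fact that immediately implies the windowed statement) rather than iterating over each of the $\sim 10^7$ windows; this sidesteps the overlap bookkeeping you anticipate and brings the pair count down from your $\sim 10^{15}$ to $\binom{\sim 9.4\times 10^6}{2}\approx 4\times 10^{13}$.
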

    
    We now proceed following the method of Gerver and Ramsey \cite{gerver1979certain}. So define $C_n^0 = \{\mbf{z}_0, \dots, \mbf{z}_{7^n}\}$ and more generally let $C_n^m=\{\mbf{z}_{m7^n},\dots, \mbf{z}_{(m+1)7^n}\}$. We have that the projection of $C_n^0$ onto the plane perpendicular to $\mbf{i}+\mbf{j}+\mbf{k}$ lies within a trapezoid with base $4^n\gamma$, base angles $\pi/3$, and adjacent sides with length $4^n\gamma/3$, with $\mbf{z}_0$ and $\mbf{z}_{7^n}$ lying at extreme ends of the base. Such a trapezoid is referred to as a trapezoid of order $n$ and we identify the set of all points inside the trapezoid of order $n$ containing the projection of $C_n^m$ with the notation $T_n^m$. By definition of $A_{n+1}$ it follows that the seven trapezoids of order $n$ fit together within a trapezoid of order $n+1$, as illustrated in Figure~\ref{fig:trapezoid-n+1}.
    
    \begin{figure}[ht]
    \centering
    \includegraphics[width=\textwidth]{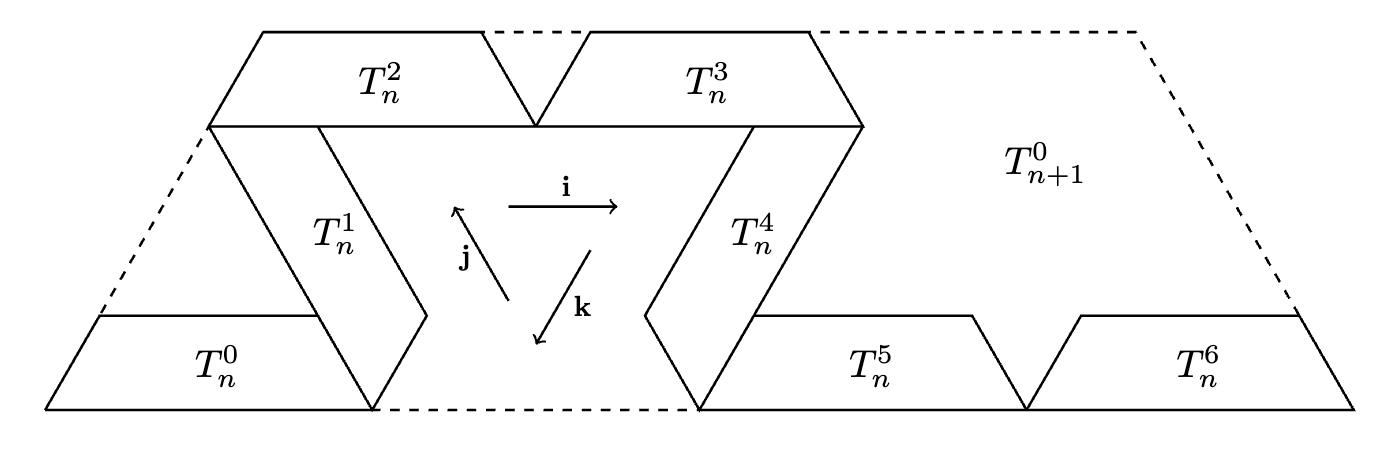}
    \caption{Trapezoids of order $n$ inside trapezoid of order $n+1$. Reproduced from Figure~1 of Gerver and Ramsey \cite{gerver1979certain}.}\label{fig:trapezoid-n+1}
    \end{figure}

    For a bigger picture view of how these trapezoids fit together, Figure~\ref{fig:trapezoid-multi-order} shows a trapezoid of order $n$ with $7$ trapezoids of order $n-1$ inside, each with $7$ trapezoids of order $n-2$ inside, each with $7$ trapezoids of order $n-3$ inside. This particular arrangement also corresponds to trapezoids $T_3^0$, and $T_2^0,\dots,T_2^{6}$, and $T_1^0,\dots, T_1^{48}$, and  $T_0^0,\dots ,T_0^{342}$. This figure is produced by the command:
    
    \texttt{DrawTrapezoids double 343 /home/finn/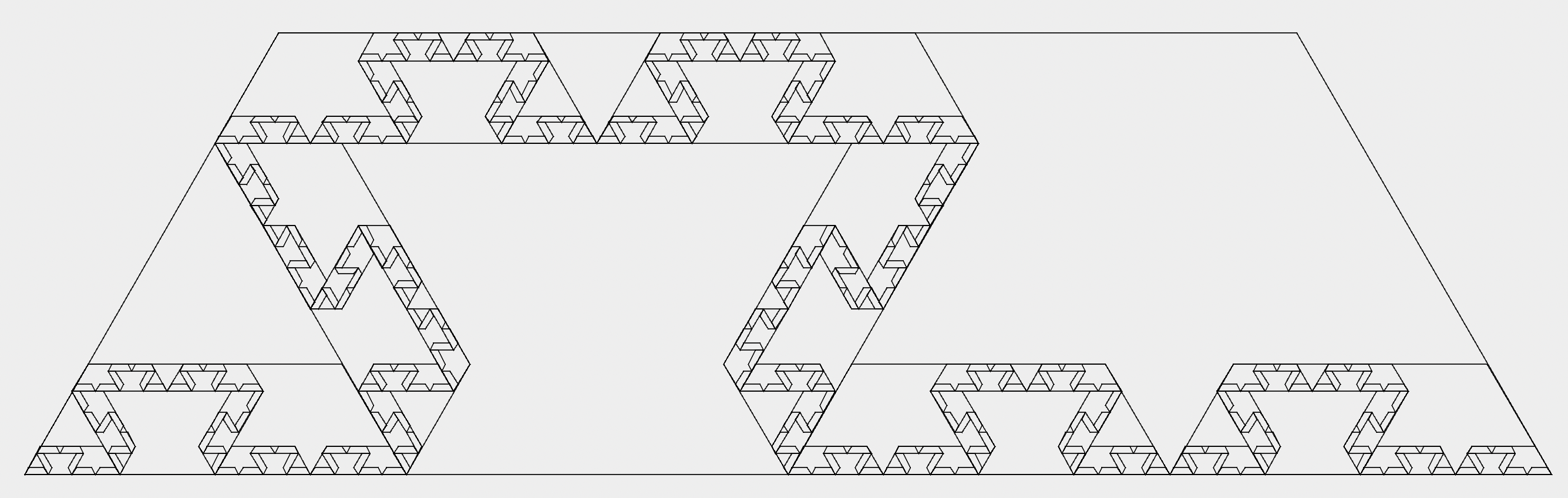 --recursive}.
    
    \begin{figure}[ht]
    \centering
    \includegraphics[width=\textwidth]{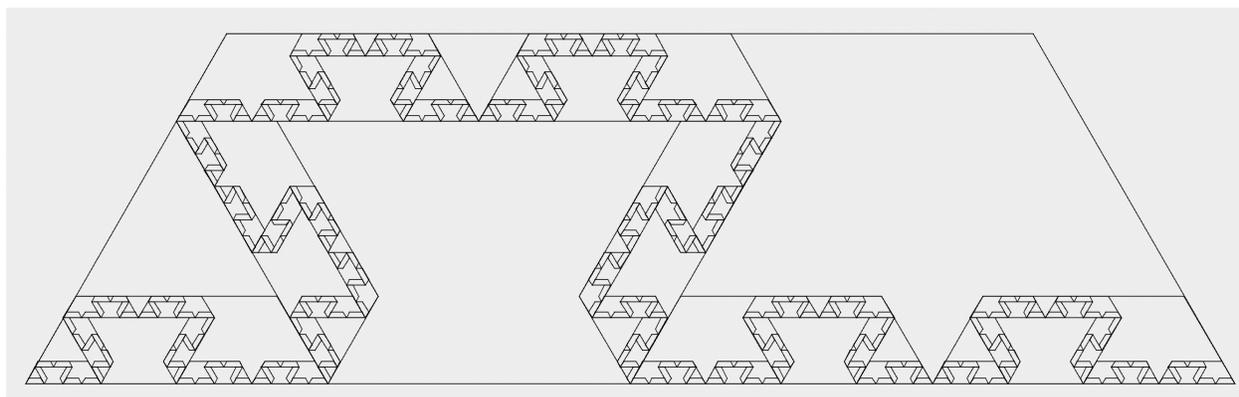}
    \caption{Example trapezoids of order $n$ through $n-3$.}\label{fig:trapezoid-multi-order}
    \end{figure}
    
    We refer to the $6$ different orientations of the trapezoids with $a,b,c,d,e,f$. Using the output map $\psi:\Sigma\rightarrow \{a,b,c,d,e,f\}$,
    \begin{align*}
        \psi(i)&=\psi(i_b')=a\\
        \psi(i')&=\psi(i_b)=b\\
        \psi(j)&=\psi(j_b')=c\\
        \psi(j')&=\psi(j_b)=d\\
        \psi(k)&=\psi(k_b')=e\\
        \psi(k')&=\psi(k_b)=f,
    \end{align*}
    Figure~\ref{fig:orientations} shows these orientations. We can show that trapezoid $T_n^m$ has orientation $\psi(\lambda[m])$.
    
    \begin{figure}[ht]
    \centering
    \includegraphics[width=\textwidth]{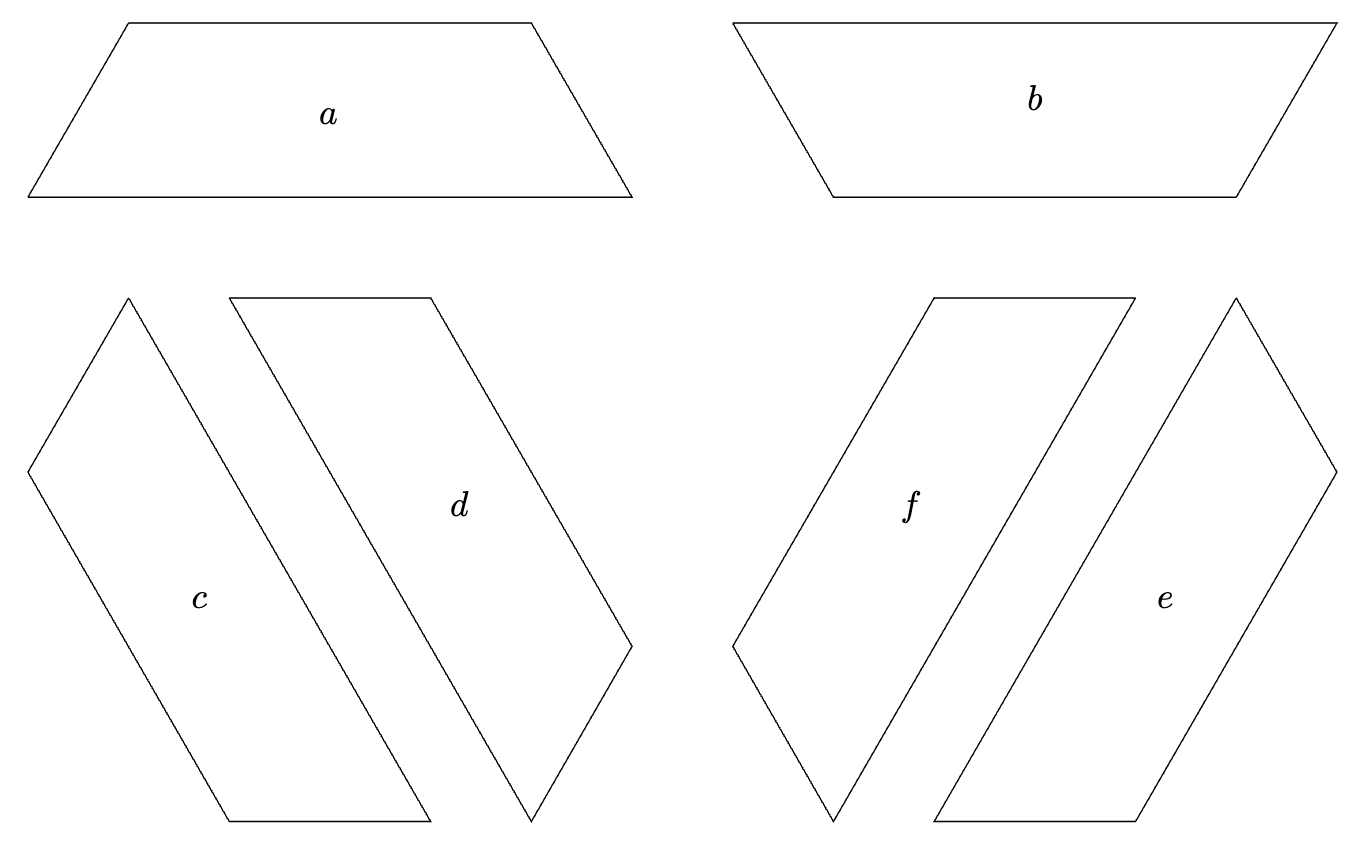}
    \caption{Trapezoid orientations for symbols $a,b,c,d,e,f$.}\label{fig:orientations}
    \end{figure}
    
    \begin{lemma}
        The orientation of trapezoid $T_n^m$ is given by $\psi(\lambda[m])$.
        \label{lem:trapezoid-orientation}
    \end{lemma}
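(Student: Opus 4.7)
The plan is to prove this by strong induction on $m$, leveraging the recursive tiling of Figure~\ref{fig:trapezoid-n+1} and the fixed-point property $\mu(\lambda)=\lambda$. The central idea is to label each trapezoid $T_n^m$ by a symbol of $\Sigma$ that encodes both its geometric orientation (via $\psi$) and its primed/barred reflection state; the morphism $\mu$ then describes precisely how a trapezoid's label determines the labels of the seven order-$(n-1)$ sub-trapezoids tiling it.

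First I would establish a local claim: if $T_n^m$ is labelled by $\sigma\in\Sigma$, then the seven sub-trapezoids $T_{n-1}^{7m},T_{n-1}^{7m+1},\dots,T_{n-1}^{7m+6}$ carry the successive symbols of $\mu(\sigma)$. For the representative case $\sigma=i$, the seven orientations $a,d,a,a,f,a,a$ read directly from Figure~\ref{fig:trapezoid-n+1} match $\psi$ applied to $\mu(i)=i\,j'\,i_b'\,i\,k_b\,i_b'\,i$. The remaining eleven letters follow by symmetry: using identities (1)--(3) together with equations~\ref{eqn:j'}--\ref{eqn:kb}, the geometric actions of $\alpha$, $\beta$, and $R$ on a trapezoid agree with their combinatorial actions on its label, and the dihedral-group interpretation from Section~\ref{sec:construction} lets me transport the result from $\sigma=i$ to every other $\sigma$.

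Strong induction on $m$ then completes the proof. For the base case, the leading trapezoid at each level is labelled by the initial letter of $\lambda$, with orientation $a$, directly from the fact that the definition of $A_n$ always begins with $A_{n-1}$. For the inductive step, writing $m=7m_0+k$ with $k\in\{0,\dots,6\}$, the parent $T_{n+1}^{m_0}$ is labelled $\lambda[m_0]$ by hypothesis, so the local claim labels $T_n^m$ by the $(k+1)$-th symbol of $\mu(\lambda[m_0])$. Since $\mu(\lambda)=\lambda$, that symbol is precisely $\lambda[m]$ (up to the paper's indexing convention), giving orientation $\psi(\lambda[m])$.

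The main obstacle I anticipate is verifying the local claim for a general $\sigma$. A twelve-way case check would be unappealing, so I would lean on Lemma~\ref{lem:equivalent-construction}, which already establishes compatibility between the group operators $\alpha$, $\beta$, $R$ acting on $\Sigma$ and their geometric counterparts; with that compatibility in hand, it suffices to verify the local claim for $\sigma=i$ and then invoke the dihedral-group symmetry to cover all twelve letters of $\Sigma$.
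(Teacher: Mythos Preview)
Your approach is correct and rests on the same key observation as the paper---that the seven order-$(n-1)$ sub-trapezoids inside a parent of label $\sigma$ are arranged according to $\psi(\mu(\sigma))$---but the organization differs in two ways. First, the paper inducts on $n$ rather than $m$: its base case is $n=0$ (the degenerate trapezoids, where the orientation is simply declared compatible with $\psi(\lambda[m])$), and its inductive step shows that $T_{n+1}^m$ inherits the orientation of $T_n^m$. Second, the paper verifies the local claim by drawing out all twelve cases $\sigma\in\Sigma$, whereas you check only $\sigma=i$ and then transport the result via the dihedral symmetry already implicit in identities~(1)--(3) and equations~(\ref{eqn:j'})--(\ref{eqn:kb}). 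Your route trades twelve pictures for one picture plus a symmetry argument; the paper's route is blunter but avoids a hidden nested induction---your base case $m=0$ still has to establish that $T_n^0$ has orientation $a$ for \emph{every} $n$, which is itself a small induction on $n$ (the appeal to ``$A_n$ begins with $A_{n-1}$'' is not quite enough on its own, since it tells you $T_{n-1}^0\subset T_n^0$ but not directly the orientation of $T_n^0$).
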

    \begin{proof}
        We prove this via induction on $n$. For $n=0$ observe that the start and end points of the projection of $\mbf{i}$ onto the plane perpendicular to $\mbf{i}+\mbf{j}+\mbf{k}$ fits within both trapezoids $a$ and $b$. A similar statement holds for $\mbf{j},\mbf{k}$ and trapezoids $c,d$ and $e,f$ respectively. Choosing the orientation $\psi(\lambda[m])$ is compatible with this, so the base case holds for all $m$. For the inductive step we can consider each $\sigma\in\{i,j,k,i',j',k',i_b,j_b,k_b,i_b',j_b',k_b'\}$ and observe that the sequence of trapezoids given by $\psi(\mu(\sigma))$ fits within a larger trapezoid with orientation $\psi(\sigma)$, by drawing out each of these cases. Thus trapezoid $T_{n+1}^m$ has the same orientation as trapezoid $T_n^m$.
    \end{proof}
    
    For two points $p_1,p_2$ lying in a plane let $d(p_1,p_2)$ be the euclidean distance between $p_1$ and $p_2$. For two trapezoids of the same order, $T_n^a$ and $T_n^b$, we define the minimum distance between them as $d(T_n^a,T_n^b)=\min\{d(p_a,p_b) : p_a\in T_n^a \text{ and }p_b\in T_n^b\}$ and the maximum distance between them as $D(T_n^a,T_n^b)=\max\{d(p_a,p_b) : p_a\in T_n^a \text{ and }p_b\in T_n^b\}$. 

    Let $p,q,r,s,n,m$ be positive integers such that $1\leq n\leq m$ and $7^n\leq |p-q| < 7^{n+1}$ and $7^m\leq |r-s|<7^{m+1}$. We show that for all such $m,n,p,q,r,s$ where the points given by $\mbf{z}_p,\mbf{z}_q,\mbf{z}_r,$ and $\mbf{z}_s$ are collinear, we have $m-n\leq 3$. The case where $n=0$ is handled separately. 
    
    Consider each $c\in\{7,8,\dots, 48\}$ and suppose that $\frac{c7^n}{7}\leq |p-q| < \frac{(c+1)7^n}{7}$. If we assume, without loss of generality, that $p<q$, then if the projection of $\mbf{z}_p$ lies in $T_{n-1}^k$, then the projection of $\mbf{z}_q$ lies in either $T_{n-1}^{k+c}$ or $T_{n-1}^{k+c+1}$. 
    
    We can then define upper and lower bounds for $\norm{\mbf{z}_p - \mbf{z}_q}^\perp$ relative to $c$ and $n$.
    \begin{align}
      \ell(n,c)&=\min_{k\in \mathbb{N}}\{d(T_{n-1}^{k},T_{n-1}^{k+c}), d(T_{n-1}^{k},T_{n-1}^{k+c+1})\},\\
      h(n,c)&=\max_{k\in \mathbb{N}}\{D(T_{n-1}^{k},T_{n-1}^{k+c}), D(T_{n-1}^{k},T_{n-1}^{k+c+1})\}.
    \end{align}
    
    Then we have that if $\frac{c\cdot 7^n}{7}\leq |p-q| < \frac{(c+1)7^n}{7}$, then $\ell(n,c)\leq \norm{\mbf{z}_p - \mbf{z}_q}^\perp \leq h(n,c)$.
    
    To compute $\ell(n,c)$ and $h(n,c)$, first observe that since trapezoids of order $n$ are congruent to trapezoids of order $n-1$ and the base length of a trapezoid of order $n$ is four times that of a trapezoid of order $n-1$. This gives us $\ell(n,c)=4^{n-1}\cdot \ell(1,c)$ and $h(n,c)=4^{n-1}\cdot h(1,c)$.
    
    \begin{lemma}
        For positive integers $j,k$, and integer $m\geq 0$ if $\lambda[j:j+m]=\lambda[k:k+m]$, then $d(T_0^j,T_0^{j+m})=d(T_0^k,T_0^{k+m})$ and $D(T_0^j,T_0^{j+m})=D(T_0^k,T_0^{k+m})$.
    \end{lemma}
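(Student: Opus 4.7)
The plan is to show that in the plane perpendicular to $\mbf{i}+\mbf{j}+\mbf{k}$, the ordered pair $(T_0^j, T_0^{j+m})$ is obtained from $(T_0^k, T_0^{k+m})$ by a rigid translation. Since $d$ and $D$ are an infimum and supremum, respectively, of Euclidean distances between pairs of points drawn one from each trapezoid, and Euclidean distance is translation-invariant, the two claimed equalities follow immediately.

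First, I would observe that an order-$0$ trapezoid is determined in the projection plane by its orientation in $\{a,b,c,d,e,f\}$ together with the position of a single reference point, since all order-$0$ trapezoids are pairwise congruent. Pick the projection of $\mbf{z}_j$ as the reference point of $T_0^j$, and similarly for the other three trapezoids. By Lemma~\ref{lem:trapezoid-orientation} applied with $n=0$, the orientations of $T_0^j$ and $T_0^k$ are both $\psi(\lambda[j]) = \psi(\lambda[k])$, and the orientations of $T_0^{j+m}$ and $T_0^{k+m}$ are both $\psi(\lambda[j+m]) = \psi(\lambda[k+m])$. Corresponding trapezoids are therefore congruent as subsets of the plane.

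Second, I would check that the relative displacement within each pair agrees. Writing $\pi$ for the orthogonal projection onto the plane perpendicular to $\mbf{i}+\mbf{j}+\mbf{k}$, linearity of $\pi$ gives
\[
\pi(\mbf{z}_{j+m}) - \pi(\mbf{z}_j) \;=\; \sum_{t=1}^{m} \pi(\phi(\lambda[j+t])),
\]
and analogously for the $(k,k+m)$ pair. The hypothesis $\lambda[j:j+m] = \lambda[k:k+m]$ gives $\lambda[j+t] = \lambda[k+t]$ for every $0 \le t \le m$, so the two sums agree termwise, and the two relative displacements coincide. Combined with the matching of orientations, this shows that $(T_0^j, T_0^{j+m})$ and $(T_0^k, T_0^{k+m})$ differ by a single translation in the projection plane, and the equalities for both $d$ and $D$ follow.

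The argument is essentially bookkeeping: the content is that each symbol $\lambda[\,\cdot\,]$ encodes exactly enough data to pin down both the orientation of an order-$0$ trapezoid (via $\psi$) and the projected step (via $\phi$), and that ``orientation plus reference point plus displacement'' determines an ordered pair of trapezoids up to translation. The main obstacle, such as it is, lies in being explicit about the reference-point convention and in confirming that Lemma~\ref{lem:trapezoid-orientation} suffices to equate the individual trapezoids' shapes; no substantive computation is required beyond that.
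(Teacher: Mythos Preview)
Your proposal is correct and follows essentially the same approach as the paper: both arguments invoke Lemma~\ref{lem:trapezoid-orientation} to match orientations, observe that the intervening steps (hence displacements) coincide because they are read off from the identical subwords of $\lambda$, conclude that the two trapezoid pairs differ by a single translation, and finish via translation-invariance of Euclidean distance. Your write-up is more explicit about the reference-point convention and the displacement sum, but the underlying idea is identical.
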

    \begin{proof}
        The orientations of the trapezoids of order $0$ are uniquely determined by the symbols in $\lambda$, by Lemma~\ref{lem:trapezoid-orientation}. Then since the sequence $(\mbf{v}_p)_{p>=1}$ between index $j$ and index $j+m$ is equal to the same as that between indices $k$ and $k+m$, trapezoids $T_0^k,\dots, T_0^{k+m}$ are translations of trapezoids $T_0^j,\dots,T_0^{j+m}$, respectively, by the same vector. Translations preserve distances, so $d(T_0^j,T_0^{j+m})=d(T_0^k,T_0^{k+m})$ and $D(T_0^j,T_0^{j+m})=D(T_0^k,T_0^{k+m})$.
    \end{proof}
    
    Hence, to compute $\ell(n,c)$ and $h(n,c)$ we only need to evaluate distances of pairs of trapezoids of order $0$ corresponding to endpoints of distinct subwords of length $c+2$ of $\lambda$. Thus, we can write
    
    \begin{align}
      \ell(1,c)&=4^{n-1}\cdot\min_{0<=k<=I(c+2)}\{d(T_{0}^{k},T_{0}^{k+c}), d(T_{0}^{k},T_{0}^{k+c+1})\},\label{eqn:l1c}\\
      h(1,c)&=4^{n-1}\cdot\max_{0<=k<=I(c+2)}\{D(T_{0}^{k},T_{0}^{k+c}), D(T_{0}^{k},T_{0}^{k+c+1})\}.\label{eqn:h1c}
    \end{align}
    
    Now suppose that $\frac{d\cdot 7^m}{7}\leq |r-s|< \frac{(d+1)7^m}{7}$, where $d\in\{7,\dots,48\}$. Then we have that $\ell(m,d)\leq \norm{\mbf{z}_r - \mbf{z}_s}^\perp \leq h(m,d)$. Suppose that $\mbf{z}_p,\mbf{z}_q, \mbf{z}_r,\mbf{z}_s$ are collinear. Then we must have that 
    $$\frac{\norm{\mbf{z}_p - \mbf{z}_q}^\perp}{\norm{\mbf{z}_p - \mbf{z}_q}^\parallel}=\frac{\norm{\mbf{z}_r - \mbf{z}_s}^\perp}{\norm{\mbf{z}_r - \mbf{z}_s}^\parallel},$$
    and therefore,
    $$\frac{\norm{\mbf{z}_p - \mbf{z}_q}^\perp}{|p - q|}=\frac{\norm{\mbf{z}_r - \mbf{z}_s}^\perp}{|r - s|}.$$
    We have 
    \begin{align}
      \frac{7\ell(n,c)}{(c+1)7^n}&\leq \frac{\norm{\mbf{z}_p - \mbf{z}_q}^\perp}{|p - q|} < \frac{7h(n,c)}{c\cdot 7^n},\label{eq:nc}\\
      \frac{7\ell(m,d)}{(d+1)7^m}&\leq \frac{\norm{\mbf{z}_r - \mbf{z}_s}^\perp}{|r - s|} <\frac{7h(m,d)}{d\cdot 7^m}.\label{eq:md}
    \end{align}
    Taking the leftmost expression in \ref{eq:nc} and the rightmost expression in \ref{eq:md} we get 
    \begin{align}
      \frac{7\ell(n,c)}{(c+1)7^n}     & < \frac{7h(m,d)}{d\cdot 7^m}\\
      \frac{7^m}{7^n}                 & < \frac{(c+1)h(m,d)}{d\cdot\ell(n,c)}\\
      7^{m-n}                         & < \frac{(c+1)4^{m-n}h(n,d)}{d\cdot\ell(n,c)}\\
      \left(\frac{7}{4}\right)^{m-n}  & < \frac{(c+1)h(n,d)}{d\cdot\ell(n,c)}\\
                                      & = \frac{(c+1)h(1,d)}{d\cdot\ell(1,c)}.
    \end{align}
    By considering all pairs $c,d\in\{7,\dots,48\}$, we find that the largest value for $$\frac{(c+1)h(1,d)}{d\cdot\ell(1,c)}$$ 
    is 
    
    $$\frac{(9+1)\sqrt{964}}{7\sqrt{28}}\approx 8.38226643996,$$
    and thus, certainly less than 9. This is asserted using the command
    
    \texttt{AssertBoundedDistanceRatio~7~48~wholeAndRt3~9~0},
    
    \noindent from the Java implementation \cite{Lidbetter_Avoiding_Collinearity_2022}. So we have $(7/4)^{m-n}<9<(7/4)^4\approx 9.379$. So $m-n<4$, but since $m$ and $n$ are integers, we know that $m-n\leq 3$. 
    
    This implies that there are at most $7^4$ collinear points in $W$. We can see this by assuming that $W$ has more than $7^4$ collinear points and deriving a contradiction. Let $p,q$ be the indices of points in $W$ that minimize $|p-q|$. Then there exist indices $r,s$ of points in $W$ such that $7^4|p-q|\leq |r-s|$. Let $n$ be an integer such that $7^n\leq |p-q|<7^{n+1}$, and let $m$ be an integer such that $7^m\leq |r-s|<7^{m+1}$. Then we have $7^{n+4}<7^{m+1}$, giving $m-n>3$. If $n>0$ then this is a contradiction. 
    
    But we can refine this argument further. Suppose that $X$ is a set of at least 2 collinear points in $W$. Let $t$ be the least integer, such that for all $\mbf{z}_r,\mbf{z}_s\in X$ we have $|r-s|<7^t$. Then there exists $\mbf{z}_p,\mbf{z}_q\in X$ such that $7^{t-1}\leq |p-q|<7^t$. If $t\leq 5$, then by Lemma~\ref{lem:localbound} there are at most $7^5\cdot \frac{6}{7^5}=6$ collinear points in $X$. If $t>5$, then we know that there are no two points of $X$ whose projections lie within the same trapezoid of order $n-4$. If this were the case, then there would exist points $\mbf{z}_r,\mbf{z}_s$ such that $7^{t-5}\leq |r-s|<7^{t-4}$. But upon comparing exponents, we have $(t-1)-(t-5)=4$, which violates the $m-n\leq 3$ inequality derived above.
     
    So, if we examine all ways that trapezoids of order $n-4$ can be arranged in the same trapezoid of order $n$ or adjacent trapezoids of order $n$ and find the maximum number of trapezoids of order $n-4$ that can be intersected by a single straight line, we will have a bound on the maximum number of collinear points in $X$. Note that when adjacent trapezoids of order $n$ are considered, we know that the difference between the index of the first trapezoid of order $n-4$ intersected by the straight line and the index of the last trapezoid of order $n-4$ intersected by the straight line is at most $7^4$, else this would correspond to having collinear points $\mbf{z}_p,\mbf{z}_q$ with $|p-q|\geq 7^{n+1}$.
     
    By Lemma~\ref{lem:trapezoid-orientation}, the sequence of trapezoid orientations is independent of their order, $n$. Thus we can check all possible configurations of $7^4$ consecutive trapezoids of order $0$ and count the maximum number of trapezoids intersected by a straight line.
     
    Exhaustively checking all possible configurations yields an upper bound of at most 188 trapezoids of order $n-4$ intersected by a single line. This is asserted using the Java implementation \cite{Lidbetter_Avoiding_Collinearity_2022} with the command
     
    \texttt{CountCollinearTrapezoids 2401 wholeAndRt3}.
     
    Therefore, $|X|\leq 188$ and there are no 189 collinear points in $W$, in the case where $n>0$.
    
    Now, we return to the case where $n=0$. That is $7^0=1\leq |p-q|< 7=7^1$. To handle this case we give separate definitions for $\ell(0,c)$ and $h(0,c)$, for $c\in\{1,\dots,6\}$ that give tighter bounds on the distances. Instead of working with trapezoids, we work with points directly. So we take $\ell(0,c)$ (respectively $h(0,c)$) to be the smallest (resp., largest) possible value for $\norm{\mbf{z}_p - \mbf{z}_q}^\perp$ such that $|p-q|=c$. These values are given in Table \ref{tab:n=0}.
    \begin{table}[H]
        \centering
        \begin{tabular}{|c|c|c|}
            $|p-q|$ & $\ell(0,|p-q|)$ & $h(0,|p-q|)$\\
            \hline
            1 & $\gamma$ & $\gamma$\\
            2 & $\gamma$ & $2\gamma$\\
            3 & $\sqrt{3}\gamma = \sqrt{2}$ & $\sqrt{3}\gamma = \sqrt{2}$\\
            4 & $\gamma$ & $\sqrt{7}\gamma$\\
            5 & $\gamma$ & $\sqrt{13}\gamma$\\
            6 & $\sqrt{3}\gamma = \sqrt{2}$ & $2\sqrt{3}\gamma = 2\sqrt{2}$ 
        \end{tabular}
        \caption{Values for $\ell(0,|p-q|)$ and $h(0,|p-q|)$ where $1\leq |p-q|<7$.}\label{tab:n=0}
    \end{table}
    
    Suppose that $7^m\leq |r-s|<7^{m+1}$ and $\mbf{z}_p$, $\mbf{z}_q$, $\mbf{z}_r$, $\mbf{z}_s$ are collinear. So there is some $d\in\{7,\dots, 48\}$ such that $\frac{d\cdot 7^m}{7}\leq |r-s| < \frac{(d+1)7^m}{7}$. By the collinearity of $\mbf{z}_p,\mbf{z}_q,\mbf{z}_r,\mbf{z}_s$ we have 
     
    \begin{align*}
        \frac{\norm{\mbf{z}_p - \mbf{z}_q}^\perp}{|p - q|}= \frac{\norm{\mbf{z}_p - \mbf{z}_q}^\perp}{\norm{\mbf{z}_p - \mbf{z}_q}^\parallel}=\frac{\norm{\mbf{z}_r - \mbf{z}_s}^\perp}{\norm{\mbf{z}_r - \mbf{z}_s}^\parallel}= \frac{\norm{\mbf{z}_r - \mbf{z}_s}^\perp}{|r - s|}.
    \end{align*}
    We can bound the leftmost and rightmost expressions as follows:
    \begin{align*}
        \frac{\ell(0,|p-q|)}{|p-q|}&\leq \frac{\norm{\mbf{z}_p - \mbf{z}_q}^\perp}{|p - q|} \leq \frac{h(0,|p-q|)}{|p-q|},\\
        \frac{7\cdot \ell(m,d)}{(d+1)\cdot 7^m}&\leq \frac{\norm{\mbf{z}_r - \mbf{z}_s}^\perp}{|r - s|} <\frac{7\cdot h(m,d)}{d\cdot 7^m}.
    \end{align*}
    Then we get 
    \begin{align*}
        \frac{\ell(0,|p-q|)}{|p-q|} &< \frac{7\cdot h(m,d)}{d\cdot 7^m},\\
        7^{m-1} &< \frac{|p-q|\cdot h(m,d)}{\ell(0,|p-q|)\cdot d} \leq \frac{5\cdot h(m,d)}{\gamma \cdot d} = \frac{5\cdot 4^{m-1}\cdot h(1,d)}{\gamma \cdot d},\\
        \left(\frac{7}{4}\right)^{m-1} &< \frac{5\cdot h(1,d)}{\gamma\cdot d}.
    \end{align*}
    Computing all values for $\frac{h(1,d)}{d}$, where $d\in\{7,\dots, 48\}$, we find that for all $d$ we have $\frac{5\cdot h(1,d)}{\gamma \cdot d}\leq \frac{10\sqrt{964}}{21}<14.89<\left(\frac{7}{4}\right)^5 = 16.4130859375$.
    This is asserted with the Java command \cite{Lidbetter_Avoiding_Collinearity_2022}
     
    \texttt{AssertBoundedMaxDistance 7 48 wholeAndRt3 1 2}.
     
    This command operates on scaled up trapezoids whose bases have length $6$, rather than length $4\gamma$. Furthermore, the command does not take the $\frac{5}{\gamma}$ multiplicative factor into account. Nevertheless, we can choose the bounding value of $1 + 2\sqrt{3}$ and take the necessary scaling adjustments into account to observe that
     
    $$(1+2\sqrt{3}) \cdot \frac{5\cdot 4\cdot \gamma}{6\cdot \gamma}<14.89.$$
    The reasons for using base lengths of $6$ in the implementation are described in Section~\ref{sec:algorithms}.
     
    So $m-1<5$, giving $m-1\leq 4$ and thus $m\leq 5$. So in this case we have $m-n\leq 5$, giving at most $7^6$ collinear points in $W$. However, by Lemma~\ref{lem:localbound} we have that in every $16807=7^5$ consecutive indices there are at most $6$ collinear points. If there are more than $7^6\cdot \frac{6}{7^5}$ collinear points then there must be some pair of indices $r,s$ of points in $W$ such that $7^6\leq|r-s|<7^7$. But this would violate the $m-n\leq 5$ inequality, since we assumed there are points with indices $p,q$ satisfying $1\leq |p-q|<7$. Therefore, in this case we have that there are at most $7^6\cdot \frac{6}{7^5}=42$ collinear points in $W$.
    
    Considering all cases we therefore have at most $188$ collinear points in $W$ and that there are no $189$ collinear points in $(\mbf{z}_p)_{p\geq 0}$. This concludes the proof of Theorem~\ref{thm:collinearbound}.
    
    \section{Algorithms and implementation}\label{sec:algorithms}
    We make some general comments on the approach taken for the implementation of the various commands, before describing the some specifics for computing distance ratios and counting collinear trapezoids. 
    
    The code for the Java commands \cite{Lidbetter_Avoiding_Collinearity_2022} has been written in a way that favours improving confidence in correctness, over efficiency and performance. In particular, the geometry calculations have been written to work with multiple ways of representing numbers. Since the points being considered have irrational coordinates, a number representation is made available that carries the radicals throughout the computation and (if necessary) only evaluates them at the end by using an over or under approximation, as appropriate, with a rational fraction. In order to give confidence in the correctness of the implementation, many unit tests are included in the project and multiple number systems are implemented for comparison. That is, in addition to the precise representation, support for a floating point representation is implemented for use in the same algorithm implementations by abstracting away the number system being used. Furthermore, in the precise representation, all arithmetic operations are checked for integer overflow. The program execution halts when using the precise representation if an operation results in integer overflow.
    
    For the precise representation, it was chosen to scale up the size of the trapezoids by a factor of $6/(4\gamma)$ to give the smallest trapezoids a base of length $6$ instead of length $4\gamma$. By doing this, all required coordinates are represented as sums of integers and integer multiples of $\sqrt{3}$. This is the reason for the use of the \texttt{WholeAndRt3} representation in the implementation. For consistency, the \texttt{DoubleRep} number system also uses trapezoids with the same dimensions. The \texttt{WholeNumber} number system cannot represent the trapezoids with the same dimensions precisely, but it can represent the trapezoid sequences in a way that preserves line intersections and collinearity by compressing all $y$ coordinates by a factor of $\sqrt{3}$. This allows it to be used for counting collinear trapezoids but not the distance ratios. On the other hand, the \texttt{DoubleRep} representation is not sufficiently precise to be able to work with the method for counting collinear trapezoids, but it can be used to assert bounds on the distance ratios. Some numerical analysis would be required to show that the operations do not introduce too much error. Such numerical analysis is unnecessary for the \texttt{WholeAndRt3} commands, which is why results are stated in Section~\ref{sec:collinearbound} using \texttt{WholeAndRt3} commands in favour of \texttt{DoubleRep}.

    On the other hand, the Rust \texttt{count\_collinear} command for counting the number of collinear points in contiguous subsequences of $W$ has been written for performance. The algorithm is implemented in the Rust file at path \texttt{cargo/count-collinear/src/compute.rs}  \cite{Lidbetter_Avoiding_Collinearity_2022}. The remainder of the code in the \texttt{cargo} project is for coordinating parallelising the computation. 
    
    \subsection{Computing distance ratios}
    
    Three commands are implemented for asserting bounds on distance ratios. These commands together assert upper bounds on each of $\frac{c+1}{\ell(1,c)}$, $\frac{h(1,d)}{d}$, and $\frac{(c+1)h(1,d)}{d\cdot \ell(1,c)}$ for a single provided range of integer values for $c$, for $d$, or for both $c$ and $d$. To find the maximum value for $\frac{(c+1)h(1,d)}{d\cdot \ell(1,c)}$, for $i\leq c,d \leq j$, we can independently maximize each of $\frac{(c+1)}{\ell(1,c)}$ and $\frac{h(1,d)}{d}$ and take the product of these maximum values.
    
    We compute $\ell(1,c)$ and $h(1,d)$ using the definitions given above in equations \ref{eqn:l1c} and \ref{eqn:h1c}, respectively, in Section~\ref{sec:collinearbound}. To do this we first compute the maximum index for which it is necessary to check for distances between trapezoids using the procedure described at the beginning of Section~\ref{sec:collinearbound}. Then we simply iterate over all trapezoid indices up to that maximum index and compute the minimum or maximum distances between points in trapezoids separated by the required number of indices, given by $c$ or $d$. The minimum distance between a pair of points in distinct trapezoids is found by considering each trapezoid vertex and finding the shortest distance to each trapezoid edge in the other trapezoid and taking the minimum of these values. The maximum distance between a pair of points in distinct trapezoids is found by considering only pairs of trapezoid vertices and taking the largest of these vertex-vertex distances.
    
    To avoid introducing precision errors, the square roots that would be introduced by euclidean distance computations are not evaluated. All required comparisons instead work with the squares of the distances.

    \subsection{Counting collinear trapezoids}
    
    To compute the largest number of trapezoids, separated by at most $2401$ indices, intersected by a single straight line, we consider each vertex of each trapezoid corresponding to each symbol of the distinct subwords of length $2401$ of $\lambda$ as a {\it pivot} vertex, and use a radial line sweep approach. For a given {\it pivot} vertex we first identify all trapezoids within $2401$ indices of the trapezoid of the pivot vertex's trapezoid. We then imagine continuously sweeping a half-infinite line from the pivot vertex from a starting position through a full rotation around the {\it pivot} vertex, identifying {\it enter} and {\it exit} event vertices for each of the trapezoids corresponding to when the sweep line first intersects a trapezoid and when it stops intersecting a trapezoid. To do this, for each trapezoid we sort its 4 vertices by angle relative to the position of the pivot vertex and an initial sweep line position. The least vertex in this ordering is the {\it enter} vertex, and the greatest vertex in this ordering is the {\it exit} vertex. Unless, of course, the trapezoid intersects the initial sweep line. If the trapezoid contains the pivot vertex, then it is included in all counts for this particular pivot vertex. Whereas if the initial sweep line intersects the trapezoid we can identify the appropriate {\it enter} and {\it exit} vertices by instead sorting the points relative to the sweep line after rotating it through 180 degrees and include this trapezoid in the initial count of intersected trapezoids. After identifying all {\it enter} and {\it exit} vertices we can sort these all together relative to the initial position of the sweep line, breaking ties by sorting {\it enter} vertices before {\it exit} vertices, and iterate over them. For each {\it enter} vertex we increment a counter, and for each {\it exit} vertex we decrement a counter. However, since we want to find the maximum number of trapezoids that are separated by at most $2401$ indices and we need to work with up to $4803$ trapezoids for a single pivot vertex, we cannot use just a single counter. Instead, we use a segment tree data structure that maintains the number of ``active'' trapezoids in each interval of indices of size $2401$. See Perparata and Shamos \cite{Preparata_1985_ComputationalGeometry}, for example, for a detailed description of the data structure. Each leaf node of the segment tree corresponds to one interval of size $2401$ and in each node we store a counter. When we encounter an {\it enter} vertex in the trapezoid with index $i$ we increment the counter for each interval of size $2401$ that contain this index. Similarly, when we encounter an {\it exit} vertex for some trapezoid with index $i$, we decrement the counter of each interval of size $2401$ containing this index. After each increment operation we query the tree for the largest value stored and compare this to the maximum counter value encountered thus far, and update the maximum if necessary. The segment tree data structure allows for these operations to be completed efficiently.
    
    While the index of the last distinct subword of length $2401$ in $\lambda$ is $1{\small,}339{\small,}415$ we need only count the number of collinear trapezoids for indices corresponding to distinct subwords. But we can go further. First, observe that we only care about distinct sequences of trapezoids, rather than subwords in $\lambda$. Second, since collinearity is preserved by rotations and reflections we can restrict our search to distinct sequences of trapezoids of length $2401$ after normalizing to take these rotations and reflections into consideration. We can do this by identifying each of the $6$ possible single trapezoid orientations with the elements of the permutation group $S_3$ and normalize sequences of trapezoids to always start with the identity, by applying the group action corresponding to the inverse of the first element in a sequence of trapezoids. For trapezoid orientations $a,b,c,d,e,f$, we have the Cayley table given in Table~\ref{tab:cayley}.
    \begin{table}[H]
        \centering
        \begin{tabular}{c|c|c|c|c|c|c|}
                & $a$ & $b$ & $c$ & $d$ & $e$ & $f$\\
            \hline
            $a$ & $a$ & $b$ & $c$ & $d$ & $e$ & $f$ \\
            \hline
            $b$ & $b$ & $a$ & $d$ & $c$ & $f$ & $e$ \\
            \hline
            $c$ & $c$ & $d$ & $f$ & $e$ & $b$ & $a$ \\
            \hline
            $d$ & $d$ & $c$ & $e$ & $f$ & $a$ & $b$ \\
            \hline
            $e$ & $e$ & $f$ & $b$ & $a$ & $c$ & $d$ \\
            \hline
            $f$ & $f$ & $e$ & $a$ & $b$ & $d$ & $c $\\
            \hline
        \end{tabular}
        \caption{Cayley table for permutation group $S_3$ with elements corresponding to trapezoid orientations.}\label{tab:cayley}
    \end{table}
    So, for example, the subword $j'i_b'ik_bi_b'i$ corresponds to the trapezoid sequence $\psi(j'i_b'ik_bi_b'i)=daafaa$. The inverse to $d$ is $e$, so applying the group operation corresponding to $e$ to each element of the trapezoid sequence, we get the normalized representation $aeedee$. Restricting the search in this way allows the computation of the largest number of trapezoids separated by at most $2401$ indices and intersected by a single straight line to complete in approximately 30 minutes on a 2020 M1 MacBook Air running macOS version 11.6.
    
    For verification purposes, the output of this algorithm was compared to a much simpler implementation that considers all lines defined by pairs of trapezoid vertices and counts the number of trapezoids intersected by that line. Comparing the outputs on thousands of smaller cases yielded no discrepancies in the results of the two implementations.

    \subsection{Counting collinear points}
    The algorithm used for calculating the largest number of collinear points in the first $n$ terms of the $S$-walk works by iterating over all pairs of points and getting a canonical representation for the infinite line through those points. For each canonical line encountered, a counter is maintained for the number of pairs of points that lie on that canonical line. In the implementation it was found that using a hashmap data structure achieved higher performance than a sorted map. While iterating over all pairs of points, simply keep track of the current largest number of points that all lie on the same line. This value is reported after the iteration over all pairs of points terminates.
    
    \section{Further improving the bound}
    \label{sec:improving}
    
    Before we consider how to improve the upper bound, it is worth pointing out that for the $S$-walk considered here thus far, the number of collinear points is at least $6$. The first example of $6$ collinear points is:
    \begin{align*}
        (46, 40, 23) & \text{ at index } 109,\\
        (48, 41, 24) & \text{ at index } 113,\\
        (64, 49, 32) & \text{ at index } 145,\\
        (66, 50, 33) & \text{ at index } 149,\\
        (82, 58, 41) & \text{ at index } 181,\\
        (84, 59, 42) & \text{ at index } 185.
    \end{align*}
    Note that this is larger than the value of $3$ stated by Gerver and Ramsey in the last paragraph before the statement of their Theorem~3 \cite{gerver1979certain} as the likely true value for the number of collinear points in this $S$-walk. 
    
    To compute the result of Lemma~\ref{lem:localbound}, it was determined that in the first $10$ million indices there are no 7 collinear points. This seems like strong evidence that largest number of collinear points in this $S$-walk is 6. Actually demonstrating an upper bound of $6$ seems difficult using these methods, even with greater computational resources. For the $n=0$ case we have a bound of at most $42$ collinear points. However, improving upon the $n>0$ case would require improving the bound on the difference in exponents at least to $m-n\leq 2$. This makes the collinear trapezoid computation easier and it would yield a result of at most $62$ collinear points, using \texttt{CountCollinearTrapezoids 343 wholeAndRt3}. However, there is no reason to restrict the count of the number of intersected trapezoids to two dimensions. The points of the sequence could be considered in three dimensions and shown to lie in trapezoidal prisms of order $n$. 
    
    There is a well-studied problem for determining the existence of {\it stabbing lines} of a set of convex polyhedra; see, for example, \cite{agarwal1994stabbing,avis1988polyhedral, nielsen2000fast}. A stabbing line for a set of convex polyhedra is an infinite line that intersects at least one facet of each polyhedron in the set. Some of the algorithms defined for identifying stabbing lines can be adapted to find the lines that intersect the largest number of polyhedra from a set. For example, for a set of convex polyhedra with $n$ vertices in total, the algorithm described by Avis and Wenger \cite{avis1988polyhedral} enumerates $O(n^3)$ candidates for stabbing lines. The lines that maximise the number of polyhedra intersected is one of these stabbing line candidates. These candidate lines can be found in $O(n^3\log n)$ time. This gives an $O(n^4\log n)$ algorithm for finding the largest number of polyhedra from the set intersected by a single line, by simply trying each candidate line and counting how many polyhedra in the set are intersected. It is conceivable that this could be improved upon for the special case of the trapezoidal prisms involved. However, for trapezoidal prisms separated by at most $2401$ indices, even an $O(n^3)$ algorithm would require excessive computational resources. On the other hand, implementing an algorithm to count the number of trapezoidal prisms separated by at most $343$ indices for all possible sequences of $343$ trapezoidal prisms may be feasible. To get the $m-n\leq 2$ bound required to limit the search to $343$ consecutive trapezoids, we would need a sufficiently fine partition of $7^n$ through $7^{n+1}$ and $7^m$ through $7^{m+1}$ to assert that the distance ratio, as above, is less than $(7/4)^3=5.359375$. Using the same strategy as above, considering all pairs $c,d\in\{49,\dots,342\}$, we get a bound of $\frac{239\sqrt{14400}}{54\sqrt{7168}}\approx 6.27316$. Pushing this further to all pairs $c,d\in\{343,\dots,2400\}$, we get a bound of $\frac{1661\sqrt{236196}}{394\sqrt{115492}}\approx 6.02884$. With the existing approach and implementation, it was not feasible to go as far as $c,d\in\{2401,\dots,16806\}$, due to the large number of distinct trapezoid sequences of length $16807$ required to consider. 
    
    This still leaves a few natural open questions. First, can the upper bound be improved to at most $6$ collinear points for this particular sequence? Second, the result presented here is particular to one specific sequence. By choosing an alternative $S$-walk for $S\subset\mathbb{Z}^3$ can it be shown that there is an infinite $S$-walk with at most $k<6$ collinear points? And finally, if it cannot be shown for the $3$-dimensional case, then is there an $S$-walk with $S\subset \mathbb{Z}^n$ with no $3$ collinear points, and if so what is the least $n$ for which this is the case?
   
    \section*{Acknowledgments}
    I would like to thank Alexander Bailey, Clayton Goes, Bradley Kleiboer, and several other (anonymous) employees at RideCo for contributing to help cover the server costs used to prove Lemma~\ref{lem:localbound}. I would also like to thank Robin Lidbetter for lending physical computational resources for use towards proving Lemma~\ref{lem:localbound}. Finally, I wish to thank Luke Schaeffer and Jeffrey Shallit for their invaluable comments and suggestions.
    \printbibliography

\end{document}